\providecommand\@dotsep{5}
\def\listtodoname{List of Todos}
\def\listoftodos{\@starttoc{tdo}\listtodoname}
\numberwithin{equation}{section}
\newtheorem{theorem}{Theorem}[section]
\newtheorem{proposition}[theorem]{Proposition}
\newtheorem{lemma}[theorem]{Lemma}
\newtheorem{corollary}[theorem]{Corollary}
\newtheorem{claim}[theorem]{Claim}
\begin{document}

\title[On existence of multiple normalized solutions to a class ...]
{On existence of multiple normalized solutions to a class of elliptic problems in whole $\mathbb{R}^N$ via penalization method }
\author{Claudianor O. Alves$^*$ and Nguyen Van Thin}
\address[Claudianor O. Alves ]
{\newline\indent Unidade Acad\^emica de Matem\'atica
\newline\indent 
Universidade Federal de Campina Grande 
\newline\indent
e-mail: coalves@mat.ufcg.edu.br
\newline\indent
58429-970, Campina Grande - PB, Brazil} 

\address[Nguyen Van Thin]
{\newline\indent Department of Mathematics, Thai Nguyen University of Education, 
\newline\indent	Luong Ngoc Quyen street, Thai Nguyen city, Thai Nguyen, Viet Nam.
\newline\indent \mbox{}
\newline\indent Thang Long Institute of Mathematics and Applied Sciences, 
\newline\indent Thang Long University, Nghiem Xuan Yem, Hoang Mai, Hanoi, Viet Nam.
\newline\indent e-mail:thinmath@gmail.com and thinnv@tnue.edu.vn }

\pretolerance10000

\begin{abstract}
In this paper  we study the existence of multiple normalized solutions  to the following class of elliptic problems  
\begin{align*}
	\left\{
	\begin{aligned}
		&-\epsilon^2\Delta u+V(x)u=\lambda u+f(u), \quad
		\quad
		\hbox{in }\mathbb{R}^N,\\
		&\int_{\mathbb{R}^{N}}|u|^{2}dx=a^{2}\epsilon^N,
	\end{aligned}
	\right.
\end{align*}
where $a,\epsilon>0$, $\lambda\in \mathbb{R}$ is an unknown parameter that appears as a Lagrange multiplier, 
$V:\mathbb{R}^N \to [0,\infty)$ is a continuous function, and $f$ is a continuous function with $L^2$-subcritical growth. It is
 proved that the number of normalized solutions is related to the topological richness of the set where the potential $V$
attains its minimum value. In the proof of our main result, we apply minimization techniques, Lusternik-Schnirelmann category and the penalization method due to del Pino and Felmer \cite{delpinoFelmer}. 
\end{abstract}

\thanks{ C. O. Alves  is the corresponding author and he was partially
supported by  CNPq/Brazil 307045/2021-8, Projeto Universal FAPESQ 3031/2021. Nguyen Van Thin is supported
by Ministry of Education and Training of Vietnam under project with
the name “Some properties about solutions to differential
equations, fractional partial differential equations” and grant
number B2023-TNA-14.}
\subjclass[2010]{35A15, 35J10, 35B09, 35B33} 
\keywords{Lusternik-Schnirelman category, Normalized solutions, Multiplicity, Nonlinear Schr\"odinger equation, Variational methods. }

\maketitle

\section{Introduction}

This paper is concerned with the existence of multiple normalized solutions to  the following class of elliptic problems 
\begin{align}\label{11}
	\left\{
	\begin{aligned}
		&-\epsilon^2\Delta u+V(x)u=\lambda u+f(u), \quad
		\hbox{in }\mathbb{R}^N,\\
		&\int_{\mathbb{R}^{N}}|u|^{2}dx=a^{2}\epsilon^N,
	\end{aligned}
	\right.
\end{align}
where $a,\epsilon>0$ and $\lambda\in \mathbb{R}$ is an unknown parameter that appears as a Lagrange multiplier. When $V(x)\equiv 0$ and $\epsilon=1$, 
problem (\ref{11}) becomes 
 \begin{equation}\label{eq0}
	\begin{cases}
		&-\Delta u=\lambda u+f(u), \quad
		\hbox{in }\mathbb{R}^N,\\
		&\displaystyle \int_{\mathbb{R}^{N}}|u|^{2}dx=a^{2},
	\end{cases}
\end{equation}
and there is a vast literature associated with that problem, see for example \cite{AJM2, CCM, valerio, Bartosz, CL, CingolaniJeanjean, Jun, Ikoma, jeanjean1, JeanjeanLu, JeanjeanLu2020, JeanjenaLu2, JeanjeanLe, JeanjeanLe2, Sh1, Nicola1, Nicola2, WW} and the references therein. In these works, the authors assume that the nonlinearity has $L^2$-subcritical, $L^2$-critical or $L^2$-super-critical growth. In $L^2$-subcritical growth case, the authors used the minimizing method to get a solution. For the other cases, the energy function is not bounded from below, and some authors explored the properties of the Pohozaev manifold to get the existence of normalized solutions.

 In the problem (\ref{11}), if $0\not\equiv V(x)\le 0$ as $|x|\to\infty$ and $\epsilon=1,$ then Ikoma and Miyamoto \cite{IkomaMiyamoto} studied the existence
 and nonexistence of a minimizer of the $L^2$-constrain minimization problem 
$$
e(a)=\inf\{E(u)| u\in H^1(\mathbb R^N), |u|_2^2=a\},
$$
where 
$$
E(u)=\dfrac{1}{2}\int\limits_{\mathbb R^N}(|\nabla u|^2dx+V(x)|u|^2)dx-\int\limits_{\mathbb R^N}F(u)dx,
$$
with $V$ and $f$ satisfying  some suitable assumptions. Since $V\in L^{\infty}(\mathbb R^N)$, then $E$ is not  weakly lower semicontinuous and the scaled transformation
 $x\mapsto u(\lambda x)$ is not useful in their case. Therefore, it was necessary to perform a careful interaction estimate to exclude dichotomy by using a scheme due to Shibata in \cite{Sh1}. In \cite{DZ}, Ding and Zhong  proved the existence of solution 
 $(u,\lambda)\in H^1(\mathbb R^N)\times \mathbb R$ for the problem 
 \begin{equation*}\label{pt1}
 	\begin{cases}
 		&-\Delta u+V(x)u+\lambda u=g(u), \quad
 		\quad
 		\hbox{in }\mathbb{R}^N, N\ge 3,\\
 		&u\ge 0, \displaystyle \int_{\mathbb{R}^{N}}|u|^{2}dx=a^{2},
 	\end{cases}
 \end{equation*}
 where the nonlinearity $g$ and potential function $V$ satisfy some suitable assumptions. They treated the mass super-critical case, still using some good property of Pohozaev manifold and mini-max structure. In 2020, Chen and Tang \cite{CT} studied the existence of normalized ground state solutions for the following nonautonomous Schr\"odinger equation
 \begin{equation*}\label{pt2}
 	\begin{cases}
 		&-\Delta u-a(x)f(u)=\lambda u, x\in\mathbb R^N\quad\\
 		&u\in H^1(\mathbb R^N) , \displaystyle \int_{\mathbb{R}^{N}}|u|^{2}dx=a^{2},
 	\end{cases}
 \end{equation*}
where $N\ge 1, \lambda\in\mathbb R, a\in C(\mathbb R^N,[0,+\infty))$ with $0<a_{\infty}=\displaystyle \lim_{|y|\to\infty}a(y)\le a(x)$ and $f\in C(\mathbb R,\mathbb R)$ satisfies some general assumptions. In that paper, the authors showed a new estimate to recover the compactness for a minimizing sequence on a suitable manifold as follows:
\begin{align*}
\mathcal M(a)=\left\{u\in S(a): J(u)=\dfrac{d}{dt}I(t^{N/2}u_t)\Big|_{t=1}=0\right\},
\end{align*}
where 
\begin{equation} \label{Sa2}
	S(a)=\{u \in H^{1}(\mathbb{R}^N) \,:\, | u |_2=a\, \},
\end{equation} 
$$
I(u)=\dfrac{1}{2}\int\limits_{\mathbb R^N}|\nabla u|^2dx-\int\limits_{\mathbb R^N}a(x)F(u)dx, 
$$
$F(t)=\int_{0}^{t}f(s)\,ds$, $u_t(x)=u(tx)$ for all $t>0$ and $u\in H^1(\mathbb R^N).$ In 2021, Bartsch, Molle, Rizzi and Verzini \cite{Bartschmolle} studied the existence of a solution for  
\begin{align}\label{pt3}
	-\Delta u+\lambda u-V(x)u=|u|^{p-2}u,\;u\in H^1(\mathbb R^N),\lambda\in \mathbb R, 
\end{align}
 by assuming that $V(x)\ge 0$, $V(x)\to 0$ as $|x|\to \infty$, $2+\dfrac{4}{N}<p<2^{*}, 2^{*}=\dfrac{2N}{N-2}$ if $N\ge 3$ and $2^{*}=+\infty$ if $N=1,2,$ and some technical conditions involving the potential $V$ and the mass $a$. In that paper, the energy functional has a mountain pass structure, but the mountain pass value is the
 mountain pass value for the case $V \equiv 0$ and it is not achieved. In order to overcome this difficulty the authors presented a new linking argument for the energy functional on $S(a)$. After that, in 2022, Molle, Riey and Verzini \cite{MRV} proved the  existence of positive solution with prescribed $L^2$-norm for the problem mentioned above with $p\in (2+\dfrac{4}{N},2^{*}).$ Under a smallness assumption on $V$ and no condition on the mass, they were able to establish the existence of a mountain pass solution with positive energy and negative energy. If the mass is small enough with suitable level, they also showed the existence of solution to (\ref{pt3}). The method used is based on Splitting Lemma to get the compactness result of Palais-Smale sequence. Still in 2022, Yang, Qi and Zou \cite{YQZ} studied the existence and multiplicity of normalized solutions to the following
Schr\"odinger equations with potentials and non-autonomous nonlinearities:

 \begin{equation}\label{pt4}
 	\begin{cases}
 		&-\Delta u+V(x)u+\lambda u=f(x,u), x\in\mathbb R^N\quad\\
 		&u\in H^1(\mathbb R^N) , \displaystyle \int_{\mathbb{R}^{N}}|u|^{2}dx=a,
 	\end{cases}
 \end{equation}
where $V(x)\le \displaystyle \lim_{|x|\to\infty}V(x):=V_{\infty}\in (-\infty,+\infty]$ and $f(x,s)$ satisfies the Berestycki-
Lions type conditions with mass sub-critical growth. In the case $V_{\infty}=+\infty,$ it was proved that the problem (\ref{pt4}) has a ground state solution for all $a>0.$ Furthermore, if $f$ is an odd function with respect to the second variable, then (\ref{pt4}) has infinitely many normalized solutions with increasing energy. When $V_{\infty}<+\infty,$ it was showed that there exists $a_0\ge 0$ such that problem (\ref{pt4}) has a solution for all $a\ge a_0.$ Finally, they investigated the multiplicity of normalized radial solutions by index theory and genus method.

We note that, when $\epsilon$ is small enough and $V\not\equiv 0,$ there are very few results related to the problem (\ref{11}). In 2022, Alves-Ji \cite{AJ21} proved the existence of solution for (\ref{11})
for $f(u)=|u|^{q-2}u, q\in (2,2+\dfrac{4}{N}), N\ge 2,$  where the potential $V$ is bounded continuous function, $1$-periodic in $x_1,\dots,x_N,$ and satisfies some suitable assumptions.  In order to prove their results, the authors proved  a compactness result for minimizing sequences $(u_n)$ constrained on $S(a)$. In a recent paper, without using the genus theory, Alves \cite{A21} studied the existence of multiple solutions for a  problem of the type 
\begin{equation*}
	\begin{cases}
		&-\Delta u=\lambda u+h(\epsilon x)f(u), \quad
		\quad
		\hbox{in }\mathbb{R}^N,\\
		&u\ge 0,\displaystyle  \int_{\mathbb{R}^{N}}|u|^{2}dx=a^{2},
	\end{cases}
\end{equation*}
where $a>0,\epsilon>0,$ $\lambda \in \mathbb R$ is an unknown parameter, $h:\mathbb R^N\to [0,+\infty)$ is a continuous function and 
$f$ is a continuous function with $L^2$-subcritial growth. In that paper, the author showed that the number of normalized solutions is at least the 
number of global maximum points of $h$ when  $\epsilon$ is small enough. In 2022, Zhang-Zhang \cite{ZZ} investigated  the existence and concentration behaviour of the multi-bump solutions for the nonlinear Schr\"odinger equation
\begin{equation*}\label{pt4*}
	\begin{cases}
		& -h^2\Delta v-K(x)|v|^{2\sigma}v=-\lambda v, \quad
		\quad
		\hbox{in }\mathbb{R}^N,\\
		&|v|_2^2=  \int_{\mathbb{R}^{N}}|v|^{2}dx=m_0h^{\alpha},\; v(x)\to 0\;\text{as}\; |x|\to\infty,
	\end{cases}
\end{equation*}
where $h>0$ is a small parameter, $\alpha$ is a constant satisfying some suitable conditions and the nonlinearity can have $L^2$-subcritical or $L^2$-supercritical growth and the potential function $K>0$ possesses several local maximum points. By using the variational method, the authors construct normalized multi-bump solutions concentrating at finite set of local maximum points of $K.$ Since the problem is raised on the sphere in $L^2,$ then it is difficult to perform the minimization argument locally. To overcome this difficulty, they introduced a new penalized functional to identity the solution. In the same year, Alves-Thin \cite{AT1} showed the existence of multiple normalized solutions to problem (\ref{11}) via Lusternik-Schnirelmann category by assuming that the potential function $V$ satisfies the following conditions: \linebreak $V \in C(\mathbb{R}^N,\mathbb{R}) \cap L^{\infty}(\mathbb{R}^N)$, $V(0)=0$ and 
$$
0 = \inf_{x  \in \mathbb{R}^N}V(x)< \liminf_{|x| \to +\infty}V(x).
$$
That kind of hypothesis on potential function $V$ was introduced by Rabinowitz in \cite{R92} to prove the existence of solution for problems of the type
\begin{align} \label{PRab}
	\left\{
	\begin{aligned}
		&-\epsilon^2\Delta u+V(x)u=f(u), \quad
		\quad
		\hbox{in }\mathbb{R}^N,\\
		& u \in H^{1}(\mathbb{R}^N),
	\end{aligned}
	\right.
\end{align}
when $\epsilon$ is small enough. 

Motivate by a seminal paper due to Pino and Felmer \cite{delpinoFelmer} that studied the existence and concentration phenomena of solutions for problem (\ref{PRab}) by assuming that exists a bounded set $\Lambda \subset \mathbb{R}^N$ such that 
$$
\min_{x  \in \overline{\Lambda}}V(x)< \min_{x \in \partial \Lambda}V(x)=V_\infty, \leqno{(V_1)}
$$
it seems natural to study the existence of multiple normalized solution to problem (\ref{11}) when $V$ satisfies the condition $(V_1).$ Here, inspired by penalized method found in \cite{delpinoFelmer} and some arguments found in \cite{AFig2}, we will show the existence of multiple normalized solutions by using the Lusternik-Schnirelmann category, which is the main contribution of the present paper for this class of problem. We would like to point out that our penalization  is a little bit different from that explored in the reference above, see the beginning of Section 3.  Another important point that we would like to mention is that our approach is totally different from that explored in \cite{delpinoFelmer} and \cite{AFig2}, because in our case the energy functional associated with the problem (\ref{11}) does not satisfy the mountain pass geometry.

Hereafter, we suppose  that the nonlinearity $f$ is a continuous function with a $L^2$-subcritical growth and satisfies the  following assumptions:
\begin{itemize}
	\item [$(f_1)$] $f$ is odd and $\displaystyle \lim_{s \to 0}\frac{|f(s)|}{|s|^{q_0-1}} =\alpha>0$ for some $q_0 \in (2,2+\frac{4}{N})$;
	\item [$(f_2)$] There exist constants $c_1, c_2 > 0$ and $p_1 \in (2,2+\frac{4}{N})$ such that
	$$
	|f(s)| \leq c_1 + c_2|s|^{p_1-1}, \quad \forall s \in \mathbb{R};
	$$
	\item [$(f_3)$] There is $q \in [q_0,2+\frac{4}{N})$ such that $f(s)/s^{q-1}$ is an increasing function of $s$ on $(0,+\infty)$. 
\end{itemize}
We would like to point out that $(f_3)$ implies that $q \geq p_1$.

An example of a function $f$ that satisfies the above assumptions is 
$$
f(s)=|s|^{q_0-2}s+|s|^{p-2}s\ln(1+|s|), \quad \forall s \in \mathbb{R},
$$
for some $2<q_0<p<2+\frac{4}{N}<2^*$.  Here, $(f_3)$ occurs with $q=q_0$. 

Related to the function $V,$ we assume that the following condition  holds:
\begin{itemize}
\item[$(V)$]	$V \in C(\mathbb{R}^N,\mathbb{R}) \cap L^{\infty}(\mathbb{R}^N)$, $V(x) \geq 0$ for all $x \in\mathbb{R}^N$ and the condition $(V_1)$. Moreover, without lost of generality, we also assume that $0 \in \Lambda$ and that $V(0)=\displaystyle \min_{x  \in \overline{\Lambda}}V(x)$.
\end{itemize}
Hereafter, we consider the sets 
$$
M=\left\{x \in \overline{\Lambda} \,:\, V(x)=V(0)\right\}
$$
and
$$
M_\delta=\left\{ x \in \overline{\Lambda}\,:\, dist(x,M) \leq \delta \right\},
$$
where $\delta>0$ and $dist(x,M)$ denotes the usual distance in $\mathbb{R}^N$ between $x$ and $M$.

Our main result is the following:
\begin{theorem} \label{T01} Suppose that $f$ satisfies  the conditions $(f_1) - (f_3)$ and that $V$ satisfies $(V).$ Then for each 
	$\delta>0$ small enough, there exist $\epsilon_0 > 0$ and $V_*>0$ such that (\ref{11}) admits at least $cat_{M_\delta}(M)$ couples
	$(u_{j}, \lambda_{j})\in H^{1}(\mathbb{R}^N)\times \mathbb{R}$ of weak solutions for $0 < \epsilon < \epsilon_0$ and 
	$|V|_\infty < V_*$ with $\int_{\mathbb{R}^{N}}|u_j|^{2}dx=a^{2}\epsilon^N$,  $\lambda_{j}<0$. Moreover, 
	if $u_\epsilon$ denotes one of these solutions and $\xi_\epsilon$ is the global maximum of $|u_\epsilon|$, then $ \displaystyle	\lim_{\epsilon \to 0}V(\xi_\epsilon)=0.$
\end{theorem}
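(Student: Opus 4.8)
The plan is to reduce the problem to a fixed-mass minimization on the sphere $S(a)$ by the scaling $v(x)=u(\epsilon x)$, which turns the constraint $\int_{\mathbb R^N}|u|^2\,dx=a^2\epsilon^N$ into $|v|_2=a$ and replaces $V(x)$ by $V(\epsilon x)$, so that the equation becomes $-\Delta v+V(\epsilon x)v=\lambda v+f(v)$. Because $f$ has $L^2$-subcritical growth, the Gagliardo--Nirenberg inequality makes the associated energy
$$E_\epsilon(v)=\frac12\int_{\mathbb R^N}|\nabla v|^2\,dx+\frac12\int_{\mathbb R^N}V(\epsilon x)|v|^2\,dx-\int_{\mathbb R^N}F(v)\,dx$$
coercive and bounded below on $S(a)$, so minimization rather than a mountain-pass scheme is the natural tool. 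To localize the problem inside $\Lambda_\epsilon:=\Lambda/\epsilon$ and to recover compactness I would introduce the del Pino--Felmer penalization, replacing $f$ outside $\Lambda_\epsilon$ by a truncated nonlinearity $\tilde f$ (the modification chosen so as to preserve boundedness from below and the minimization structure), obtaining a penalized functional $J_\epsilon$ on $S(a)$ whose critical points that genuinely concentrate inside $\Lambda_\epsilon$ coincide with solutions of the original equation.

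The next step is the energy comparison with the autonomous limit problem $-\Delta w+V(0)w=\lambda w+f(w)$ on $S(a)$, whose least-energy level I call $c_0$ and whose positive, radially symmetric minimizer $w_0$ exists by a standard concentration--compactness argument in the subcritical regime. Using translates $w_0(\cdot-y/\epsilon)$ centred at points $y\in M$ as test functions and exploiting $V(\epsilon x)\to V(0)=0$ on the concentration region together with the recession of the penalization set, I would show $\inf_{S(a)}J_\epsilon\to c_0$ as $\epsilon\to0$ and, more precisely, produce a sublevel set $J_\epsilon^{\,c_0+h(\epsilon)}$ with $h(\epsilon)\to0$. The smallness hypothesis $|V|_\infty<V_*$ enters exactly here, to guarantee that $c_0$ lies strictly below the compactness threshold carried by the penalized problem at infinity, so that $J_\epsilon|_{S(a)}$ satisfies the Palais--Smale condition on this sublevel set and the Lagrange multiplier stays negative.

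With compactness in hand the multiplicity is produced by Lusternik--Schnirelmann category. I would build a map $\Phi_\epsilon:M\to J_\epsilon^{\,c_0+h(\epsilon)}$ from the test functions above and a barycenter map $\beta_\epsilon$ sending low-energy functions back into $M_\delta$; the heart of the matter is a concentration lemma showing that any $v\in S(a)$ with $J_\epsilon(v)$ close to $c_0$ has its mass concentrated near $M$, so that $\beta_\epsilon$ is well defined and $\beta_\epsilon\circ\Phi_\epsilon$ is homotopic to the inclusion $M\hookrightarrow M_\delta$. This yields $\cat\big(J_\epsilon^{\,c_0+h(\epsilon)}\big)\ge \cat_{M_\delta}(M)$, and the abstract category theorem then gives at least $\cat_{M_\delta}(M)$ critical points $(v_j,\lambda_j)$ of $J_\epsilon|_{S(a)}$.

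Finally I would remove the penalization and record the concentration. For $\epsilon$ small the del Pino--Felmer $L^\infty$/decay estimates force each $v_j$ to be small outside $\Lambda_\epsilon$, so that $\tilde f=f$ along $v_j$ and $v_j$ solves the genuine equation; rescaling back by $u_j(x)=v_j(x/\epsilon)$ restores $\int|u_j|^2\,dx=a^2\epsilon^N$. Tracking the maximum point $\xi_\epsilon=\epsilon\,y_\epsilon$ of $|u_\epsilon|$, the concentration lemma gives $\epsilon y_\epsilon\to M$, whence $V(\xi_\epsilon)\to V(0)=0$. I expect the main obstacle to be precisely the absence of a mountain-pass geometry: unlike in \cite{delpinoFelmer} and \cite{AFig2}, compactness for the constrained minimization must be recovered by excluding vanishing and dichotomy for minimizing sequences of the penalized functional, and the penalization itself must be tailored so that this works while still allowing one to remove it a posteriori. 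This interplay, together with the uniform control of the sign of $\lambda_j$, is the delicate core of the argument.
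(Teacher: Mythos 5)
Your proposal follows essentially the same route as the paper: rescaling to a fixed-mass problem on $S(a)$, a del Pino--Felmer-type truncation of $f$ outside $\overline{\Lambda}$, comparison of $\inf_{S(a)}J_\epsilon$ with the autonomous levels at $V(0)$ and $V_\infty$ (where the smallness $|V|_\infty<V_*$ and the penalization parameter are used to get the Palais--Smale condition below the threshold with negative Lagrange multipliers), Lusternik--Schnirelmann category via the maps $\Phi_\epsilon$ and the barycenter $\beta_\epsilon$, and finally uniform decay estimates to show the penalized solutions solve the original equation and concentrate in $M$. The plan is correct and matches the paper's proof in all essential steps, including its identification of the lack of mountain-pass geometry and the recovery of compactness for the constrained minimization as the delicate points.
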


In order to prove the Theorem \ref{T01}, we will consider the problem below
\begin{align}\label{110}
	\left\{
	\begin{aligned}
		&-\Delta u+V(\epsilon x)u=\lambda u+f(u), \quad
		\quad
		\hbox{in }\mathbb{R}^N,\\
		&\int_{\mathbb{R}^{N}}|u|^{2}dx=a^{2},
	\end{aligned}
	\right.
\end{align}
which is equivalent to the problem (\ref{11}). Our goal in the first moment is to prove the existence of at least  $cat_{M_\delta}(M)$ couples
$(u_{j}, \lambda_{j})\in H^{1}(\mathbb{R}^N)\times \mathbb{R}$ critical points for the energy functional 
$$
I(u)=\frac{1}{2}\int_{\mathbb{R}^N}(|\nabla u|^2 +V(\epsilon x)|u|^2)\,dx-\int_{\mathbb{R}^N}F(u)\,dx,\,\, u\in H^{1}(\mathbb{R}^N),
$$
restricted to the sphere $S(a)$ given in (\ref{Sa2}), when $\epsilon$ is small enough. In the second moment, we study the concentration phenomena when $\epsilon \to 0$.

We would like to point out that if $Y$ is a closed subset of a topological space $X$, the
Lusternik-Schnirelmann category $cat_X(Y)$ is the least number of closed and contractible
sets in $X$ which cover $Y$. If $X = Y$, we use the notation $cat(X)$. For more details about this
subject, we cite \cite{Willem}.

The paper is organized as follows. In Section \ref{section2}, we recall some technique results. In Section 3, we study the penalized problem, more precisely, we study the Palais-Smale condition of modified energy function on the sphere $S(a)$ and prove some tools which are useful to establish a multiplicity result. Finally, in Section 4, we prove the multiplicity and concentration of solutions to problem (\ref{11}).

\section{Some technical results}\label{section2}

In this short section, we recall some results found in \cite{A21} that involve the existence of normalized solution for the problem 
 \begin{equation}\label{ct1}
 	\begin{cases}
 		&	-\Delta u+\mu u=\lambda u+f(u), \quad\hbox{in }\mathbb{R}^N,\\
 		&\int_{\mathbb{R}^{N}}|u|^{2}dx=a^{2},
 	\end{cases}
 \end{equation}
where $a>0$, $\mu \geq 0$ and $\lambda\in \mathbb{R}$ is an unknown parameter that appears as a Lagrange multiplier 
and $f$ is a continuous function satisfying $(f_1)-(f_3)$.   

It is well known that a solution $u$ to the problem (\ref{ct1}) corresponds to a critical point of the following $C^{1}$ functional
$$
I_\mu(u)=\frac{1}{2}\int_{\mathbb{R}^N}(|\nabla u|^2 +\mu|u|^2)\,dx-\int_{\mathbb{R}^N}F(u)\,dx,\,\, u\in H^{1}(\mathbb{R}^N),
$$
restricted to the sphere $S(a)$ given in (\ref{Sa2}), where $F(t)=\int\limits_{0}^{t}f(s)ds.$ 

The main result associated with problem $(\ref{ct1})$ is the following:
\begin{theorem} \label{T12} 
	Suppose that $f$ satisfies  the conditions $(f_1) - (f_3)$. Then, there is $V_*>0$ such that problem (\ref{ct1}) has a couple $(u,\lambda)$ solution for all $\mu \in [0,V_*]$, where $u$ is positive, radial and $\lambda<0$.
\end{theorem}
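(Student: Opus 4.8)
The plan is to reduce the whole statement to the case $\mu=0$ and then recover every $\mu\ge 0$ by a trivial constant shift. The crucial observation is that on the constraint $S(a)$ one has $\frac{\mu}{2}\int_{\mathbb{R}^N}|u|^2\,dx=\frac{\mu}{2}a^2$ for every $u\in S(a)$, hence $I_\mu(u)=I_0(u)+\frac{\mu}{2}a^2$ for all $u\in S(a)$. Consequently $I_\mu$ and $I_0$ have exactly the same minimizers on $S(a)$, and it suffices to produce one minimizer of $I_0$ on $S(a)$ with the stated qualitative properties and to control the associated Lagrange multiplier.

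First I would prove that $I_0$ is coercive and bounded from below on $S(a)$. Combining $(f_1)$ and $(f_2)$ gives $|F(s)|\le C(|s|^{q_0}+|s|^{p_1})$ with $q_0,p_1\in(2,2+\frac4N)$; the Gagliardo--Nirenberg inequality then yields $\int_{\mathbb{R}^N}F(u)\,dx\le C(|\nabla u|_2^{\theta_0}+|\nabla u|_2^{\theta_1})$ on $S(a)$, where $\theta_0=\frac{N(q_0-2)}{2}$ and $\theta_1=\frac{N(p_1-2)}{2}$ are both strictly smaller than $2$. Hence $I_0(u)\ge \frac12|\nabla u|_2^2-C(|\nabla u|_2^{\theta_0}+|\nabla u|_2^{\theta_1})$ is coercive and bounded below, so $e_0(a):=\inf_{S(a)}I_0$ is finite. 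Testing with the $L^2$-preserving dilation $v_t(x)=t^{N/2}v(tx)$ and letting $t\to 0^+$ gives $I_0(v_t)\approx -c\,t^{\theta_0}+\frac{t^2}{2}|\nabla v|_2^2$, which is negative for small $t$ because $\theta_0<2$ and $F>0$ near the origin by $(f_1)$; thus $e_0(a)<0$, and the same computation for any mass gives $e_0(c)<0$ for all $c>0$.

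Next I would establish compactness of a minimizing sequence, which I expect to be the main obstacle. By Schwarz symmetrization I may assume the minimizing sequence $(u_n)$ consists of nonnegative, radially symmetric, nonincreasing functions, since symmetrization preserves $|u_n|_2$ and $\int F(u_n)\,dx$ (as $F$ is even by $(f_1)$) while not increasing $|\nabla u_n|_2$. The compact embedding $H^1_{rad}(\mathbb{R}^N)\hookrightarrow L^{p}(\mathbb{R}^N)$ for $2<p<2^{*}$ gives $u_n\rightharpoonup u$ in $H^1$ and $u_n\to u$ in $L^{q_0}\cap L^{p_1}$, whence $\int F(u_n)\,dx\to\int F(u)\,dx$; since $e_0(a)<0$ forces $\int F(u)\,dx>0$, the limit $u$ is nonzero. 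To upgrade weak to strong $L^2$ convergence I would rule out loss of mass by the subadditivity $e_0(a)\le e_0(b)+e_0(\sqrt{a^2-b^2})$, proved by placing near-optimizers of masses $b$ and $\sqrt{a^2-b^2}$ with disjoint supports: a Brezis--Lieb splitting gives $e_0(b)\le e_0(a)$ for $b=|u|_2\le a$, and if $b<a$ then $e_0(\sqrt{a^2-b^2})<0$ contradicts subadditivity. Hence $|u|_2=a$, the convergence is strong in $H^1$, and $u$ realizes $e_0(a)$; it is radial by construction and, by the strong maximum principle applied to its Euler--Lagrange equation, strictly positive.

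Finally I would read off the sign of the multiplier. For $\mu=0$ the minimizer solves $-\Delta u=\lambda_0 u+f(u)$, and testing with $u$ gives $\lambda_0 a^2=|\nabla u|_2^2-\int f(u)u\,dx$. Condition $(f_3)$ yields the pointwise bound $qF(s)\le sf(s)$ for $s>0$, hence $\int f(u)u\,dx\ge q\int F(u)\,dx$ with $q>2$, while $I_0(u)=e_0(a)<0$ gives $\int F(u)\,dx>\frac12|\nabla u|_2^2$; combining, $\int f(u)u\,dx>\frac q2|\nabla u|_2^2>|\nabla u|_2^2$, so $\lambda_0<0$. For general $\mu$ the common minimizer $u$ satisfies, by the Lagrange multiplier rule, $I_\mu'(u)=I_0'(u)+\mu u=(\lambda_0+\mu)u$, so $(u,\lambda_\mu)$ solves \eqref{ct1} with $\lambda_\mu=\lambda_0+\mu$. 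Choosing any $V_*\in(0,-\lambda_0)$ then guarantees $\lambda_\mu<0$ for all $\mu\in[0,V_*]$, which completes the proof.
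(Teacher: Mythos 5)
Your proposal is correct, but it takes a genuinely different route from the one behind the paper: Theorem \ref{T12} is recalled from \cite{A21} rather than proved here, and the ingredient the paper singles out as crucial in that proof is the translation-based compactness result restated as Theorem \ref{310}. You differ in two ways. First, the identity $I_\mu(u)=I_0(u)+\frac{\mu}{2}a^2$ on $S(a)$ collapses the whole family $\mu\in[0,V_*]$ to the single case $\mu=0$: the minimizer is the same for all $\mu$, the multiplier simply shifts to $\lambda_0+\mu$, and $V_*$ may be taken to be any number in $(0,-\lambda_0)$; this also turns Corollary \ref{Cor1} into the identity $\mathcal{I}_{\mu,a}=\mathcal{I}_{0,a}+\frac{\mu a^2}{2}$, whereas in the paper's framework smallness of $\mu$ (or of $|V|_\infty$, cf. Lemma \ref{L1*}) is what keeps the relevant infima negative. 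Second, your compactness mechanism is Schwarz symmetrization plus radial compactness plus non-strict subadditivity: since the escaping part $v_n=u_n-u$ of a symmetric-decreasing minimizing sequence carries no nonlinear energy ($\int_{\mathbb{R}^N} F(v_n)\,dx\to 0$), the chain $e_0(b)\le I_0(u)\le e_0(a)\le e_0(b)+e_0(\sqrt{a^2-b^2})<e_0(b)$ for $b=|u|_2<a$ yields a contradiction without any strict subadditivity or interaction estimate, something translation-based arguments like Theorem \ref{310} cannot exploit and must replace with finer dichotomy-excluding estimates. Your route is more elementary and gives radiality of the minimizer for free; what the paper's route buys is reusability: Theorem \ref{310} makes no symmetry reduction, and exactly that is needed later (Proposition \ref{P1}), where the functional $J_\epsilon$ contains $V(\epsilon x)$ and symmetrization is unavailable, so your argument proves Theorem \ref{T12} but could not substitute for Theorem \ref{310} in the rest of the paper. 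Two minor caveats: for $N=1$ the compact embedding of radial $H^1$ functions fails, so you should instead use the pointwise decay $|u_n(x)|\le C|x|^{-N/2}$ of symmetric-decreasing functions to pass to the limit in $\int_{\mathbb{R}^N}F(u_n)\,dx$; and, exactly as the paper does in Lemma \ref{L1*}, you implicitly read $(f_1)$ as giving $f>0$ (hence $F>0$) near $0^+$, which is what makes $e_0(c)<0$ for every $c>0$.
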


In the proof of the above theorem, it is proved that the real number
$$
\mathcal{I}_{\mu,a}=\inf_{u \in S(a)}I_\mu(u)
$$
is achieved and $\mathcal{I}_{\mu,a}<0$. A crucial result that was used in the proof of the Theorem \ref{T12} is the compactness theorem on $S(a)$ below, which will be also used in the present paper later on. 
\begin{theorem} \label{310} (Compactness theorem on $S(a)$)~~  Let $(u_n)\subset S(a)$ be a minimizing sequence with respect to $\mathcal{I}_{\mu,a}$ and $\mu \in [0,V_*]$.
	Then, for some subsequence either\\
	\noindent $i)$ \, $(u_n)$ is strongly convergent, \\
	or \\
	\noindent $ii)$ \, There exists $(y_n) \subset \mathbb{R}^N$ with $|y_n|\rightarrow +\infty$ such that the sequence $v_n(x)=u_n(x+y_n)$ is strongly convergent to a function $v\in S(a)$ with  $\mathcal{I}_{\mu}(v)=\mathcal{I}_{\mu,a}$ 
\end{theorem}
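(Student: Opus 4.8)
The plan is to apply Lions' concentration--compactness principle to the mass densities $\rho_n=|u_n|^2$, and to exclude both vanishing and dichotomy so that only the compact alternative (possibly after translation) remains. I first record that minimizing sequences are bounded in $H^1(\mathbb{R}^N)$: by $(f_1)$ and $(f_2)$ one has $|F(s)|\le C_1|s|^{q_0}+C_2|s|^{p_1}$ with $q_0,p_1\in(2,2+\tfrac4N)$, so the Gagliardo--Nirenberg inequality gives, for $u\in S(a)$, $\int_{\mathbb{R}^N}F(u)\,dx\le C|\nabla u|_2^{\theta}$ with $\theta=\max\{N(\tfrac{q_0}{2}-1),N(\tfrac{p_1}{2}-1)\}<2$. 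Consequently $I_\mu(u)\ge\tfrac12|\nabla u|_2^2-C|\nabla u|_2^{\theta}$ is coercive on $S(a)$, and since $|u_n|_2=a$ the sequence $(u_n)$ is bounded in $H^1(\mathbb{R}^N)$. After passing to a subsequence, Lions' lemma gives one of the three alternatives---compactness up to translation, vanishing, or dichotomy---for $(\rho_n)$ of total mass $a^2$.

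To exclude vanishing, suppose $\sup_{y\in\mathbb{R}^N}\int_{B_R(y)}|u_n|^2\,dx\to0$ for every $R>0$. Lions' vanishing lemma then yields $u_n\to0$ in $L^s(\mathbb{R}^N)$ for all $s\in(2,2^*)$, whence $\int_{\mathbb{R}^N}F(u_n)\,dx\to0$ by the growth of $F$, and therefore $\liminf_n I_\mu(u_n)\ge0$. This contradicts $\mathcal{I}_{\mu,a}<0$, a fact available from the proof of Theorem \ref{T12}.

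Excluding dichotomy is the step I expect to be the main obstacle. Set $\sigma(t)=\inf\{I_\mu(u):|u|_2^2=t\}$, so $\sigma(a^2)=\mathcal{I}_{\mu,a}$; the Lions splitting estimate shows that dichotomy with parameter $\alpha\in(0,a^2)$ would force $\sigma(a^2)\ge\sigma(\alpha)+\sigma(a^2-\alpha)$, and I would contradict this by proving the strict subadditivity $\sigma(a^2)<\sigma(\alpha)+\sigma(a^2-\alpha)$ for every $\alpha\in(0,a^2)$. The engine is the strict sub-homogeneity $\sigma(\vartheta t)<\vartheta\,\sigma(t)$ for $\vartheta>1$ whenever $\sigma(t)<0$: applying the dilation $u\mapsto u(\vartheta^{-1/N}\,\cdot)$, which sends mass $t$ to $\vartheta t$, one computes $I_\mu(u(\vartheta^{-1/N}\cdot))=\vartheta I_\mu(u)-\tfrac12(\vartheta-\vartheta^{(N-2)/N})|\nabla u|_2^2$, and near-minimizers at a level $\sigma(t)<0$ satisfy a uniform bound $|\nabla u|_2^2\ge c>0$ (because $I_\mu(u)<0$ forces $\int F(u)>\tfrac12|\nabla u|_2^2$, which combined with the subcritical Gagliardo--Nirenberg bound keeps $|\nabla u|_2$ away from $0$), so the negative correction survives in the infimum. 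A delicate point is that $\sigma$ need not be negative at small masses, so I would split into cases according to the signs of $\sigma(\alpha)$ and $\sigma(a^2-\alpha)$: if both are $\ge0$ then the right-hand side is $\ge0>\mathcal{I}_{\mu,a}$; if exactly one is negative, say $\sigma(\beta)<0$ with $\beta\in\{\alpha,a^2-\alpha\}$, strict sub-homogeneity at mass $\beta$ gives $\sigma(a^2)<\sigma(\beta)\le\sigma(\alpha)+\sigma(a^2-\alpha)$; and if both are negative, the classical rescaling argument combining $\sigma(a^2)\le\tfrac{a^2}{\alpha}\sigma(\alpha)$ with $\tfrac{a^2-\alpha}{\alpha}\sigma(\alpha)\le\sigma(a^2-\alpha)$ gives the strict inequality. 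Here $\mathcal{I}_{\mu,a}<0$ is essential throughout.

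With vanishing and dichotomy ruled out, the compact alternative provides $(y_n)\subset\mathbb{R}^N$ for which $v_n:=u_n(\cdot+y_n)$ is tight; being bounded in $H^1$, along a subsequence $v_n\rightharpoonup v$ in $H^1(\mathbb{R}^N)$ and $v_n\to v$ in $L^2(\mathbb{R}^N)$ with $|v|_2=a$, so $v\in S(a)$. The subcritical growth gives $\int F(v_n)\to\int F(v)$, and translation invariance gives $I_\mu(v_n)=I_\mu(u_n)\to\mathcal{I}_{\mu,a}$; weak lower semicontinuity of $u\mapsto\int(|\nabla u|^2+\mu|u|^2)$ then yields $I_\mu(v)\le\mathcal{I}_{\mu,a}$, so $v$ is a minimizer and $I_\mu(v)=\mathcal{I}_{\mu,a}$. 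Matching the energies forces $\int(|\nabla v_n|^2+\mu|v_n|^2)\to\int(|\nabla v|^2+\mu|v|^2)$, which with the weak convergence upgrades $v_n\to v$ to strong convergence in $H^1(\mathbb{R}^N)$. Finally I split on $(y_n)$: if it has a bounded subsequence, then along it $y_n\to y_0$ and $u_n=v_n(\cdot-y_n)\to v(\cdot-y_0)$ strongly in $H^1$, which is alternative $i)$; otherwise, after passing to a subsequence, $|y_n|\to+\infty$, which is exactly alternative $ii)$.
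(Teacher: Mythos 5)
The paper never proves Theorem \ref{310}: Section \ref{section2} explicitly \emph{recalls} it from \cite{A21}, so there is no internal proof to compare yours against; your argument has to stand on its own, and it does. It is the classical Lions concentration--compactness route (which is also the spirit of the cited source): coercivity of $I_\mu$ on $S(a)$ via Gagliardo--Nirenberg (your bound $|F(s)|\le C(|s|^{q_0}+|s|^{p_1})$ correctly uses $(f_1)$ to absorb the constant $c_1$ from $(f_2)$ near $s=0$); vanishing excluded because $\mathcal{I}_{\mu,a}<0$, which is indeed available independently of compactness by the dilation argument of Lemma \ref{L1*}, so there is no circularity; dichotomy excluded by strict subadditivity of $\sigma(t)=\inf\{I_\mu(u):|u|_2^2=t\}$; and the standard tightness-plus-weak-lower-semicontinuity upgrade to strong $H^1$ convergence, with the final split on boundedness of $(y_n)$ producing exactly alternatives $i)$ and $ii)$. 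Your scaling identity $I_\mu(u(\vartheta^{-1/N}\cdot))=\vartheta I_\mu(u)-\tfrac12\bigl(\vartheta-\vartheta^{(N-2)/N}\bigr)|\nabla u|_2^2$ is correct for all $N\ge1$, the uniform lower bound on $|\nabla u|_2$ for near-minimizers at a negative level is argued correctly, and the three-case analysis on the signs of $\sigma(\alpha)$, $\sigma(a^2-\alpha)$ is the right way to handle the possibility that $\sigma$ is nonnegative at small masses. Two points you leave implicit and should spell out in a full write-up, though neither is a gap: (a) the ``Lions splitting estimate'' $\sigma(a^2)\ge\sigma(\alpha)+\sigma(a^2-\alpha)$ under dichotomy needs the cutoff decomposition of $u_n$ together with the continuity of $t\mapsto\sigma(t)$, since dichotomy only provides pieces whose masses \emph{converge} to $\alpha$ and $a^2-\alpha$; (b) in the case where both $\sigma(\alpha)$ and $\sigma(a^2-\alpha)$ are negative, the inequality $\tfrac{a^2-\alpha}{\alpha}\sigma(\alpha)\le\sigma(a^2-\alpha)$ applies sub-homogeneity with factor $\alpha/(a^2-\alpha)$, which requires the harmless normalization $\alpha\ge a^2/2$ (legitimate, as the subadditivity statement is symmetric in $\alpha\leftrightarrow a^2-\alpha$).
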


An immediate consequence of Theorem \ref{T12} is the following corollary 
\begin{corollary} \label{Cor1} Fix $a>0$ and let $0\le \mu_1<\mu_2 \leq V_*$. Then, $\mathcal{I}_{\mu_1,a}<\mathcal{I}_{\mu_2,a}<0$.
	
\end{corollary}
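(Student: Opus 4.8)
The plan is to prove Corollary \ref{Cor1} by a direct monotonicity comparison of the two infima, exploiting that the minimizer for the larger parameter is an admissible competitor for the smaller one. Fix $a>0$ and take $0 \le \mu_1 < \mu_2 \le V_*$. By Theorem \ref{T12} (together with the remark that $\mathcal{I}_{\mu,a}<0$ is achieved), there exists $u_2 \in S(a)$ with $I_{\mu_2}(u_2)=\mathcal{I}_{\mu_2,a}<0$; in particular $u_2 \not\equiv 0$, so $\int_{\mathbb{R}^N}|u_2|^2\,dx = a^2 > 0$. Since the two functionals differ only in the potential term, I would write
\begin{equation*}
I_{\mu_1}(u_2) = I_{\mu_2}(u_2) - \frac{1}{2}(\mu_2-\mu_1)\int_{\mathbb{R}^N}|u_2|^2\,dx = \mathcal{I}_{\mu_2,a} - \frac{1}{2}(\mu_2-\mu_1)a^2.
\end{equation*}
Because $\mu_2-\mu_1>0$ and $a^2>0$, the subtracted quantity is strictly positive, so $I_{\mu_1}(u_2) < \mathcal{I}_{\mu_2,a}$. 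Using $u_2 \in S(a)$ as a test function in the infimum defining $\mathcal{I}_{\mu_1,a}$ gives $\mathcal{I}_{\mu_1,a} \le I_{\mu_1}(u_2) < \mathcal{I}_{\mu_2,a}$, which is the desired strict inequality on the left.

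For the right-hand inequality $\mathcal{I}_{\mu_2,a}<0$, I would simply invoke the statement recorded after Theorem \ref{T12}, namely that $\mathcal{I}_{\mu,a}<0$ for every $\mu \in [0,V_*]$; applying this with $\mu=\mu_2 \le V_*$ closes the chain $\mathcal{I}_{\mu_1,a}<\mathcal{I}_{\mu_2,a}<0$.

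The argument is essentially a one-line monotonicity estimate, and I expect no real obstacle: the only thing to verify carefully is that the minimizer $u_2$ actually exists (so that $\mathcal{I}_{\mu_2,a}$ is attained and equals $I_{\mu_2}(u_2)$) rather than merely being an infimum, since otherwise one would need to run the same comparison along a minimizing sequence. The attainment is exactly what Theorem \ref{T12} provides. One could even bypass attainment entirely by taking any minimizing sequence $(u_n)\subset S(a)$ for $\mathcal{I}_{\mu_2,a}$, applying the identity $I_{\mu_1}(u_n)=I_{\mu_2}(u_n)-\tfrac{1}{2}(\mu_2-\mu_1)a^2$ and passing to the limit, but using the attained minimizer is cleaner. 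The normalization $\int|u_2|^2=a^2$ being a fixed constant on all of $S(a)$ is what makes the potential-term difference independent of which competitor is chosen, and that is the structural feature driving the strict monotonicity.
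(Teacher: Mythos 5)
Your proof is correct and is precisely the argument the paper has in mind: the paper states the corollary as an ``immediate consequence'' of Theorem \ref{T12} without writing out details, and the intended reasoning is exactly your observation that $I_{\mu_1}$ and $I_{\mu_2}$ differ on $S(a)$ by the positive constant $\tfrac{1}{2}(\mu_2-\mu_1)a^2$, combined with $\mathcal{I}_{\mu_2,a}<0$ from the theorem. Your remark that attainment is not even needed (since the infima themselves satisfy $\mathcal{I}_{\mu_1,a}=\mathcal{I}_{\mu_2,a}-\tfrac{1}{2}(\mu_2-\mu_1)a^2$) is a valid and slightly cleaner variant of the same idea.
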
 

\section{The penalized problem}\label{section3}

In what follows, we set $\tau,b>0$ such that $\frac{f(b)}{b^{q-1}}=\tau$ and the function 
\begin{equation*}
	\tilde{f}(t):=
	\begin{cases}
		f(t), & 0 \leq |t| \leq  b,\\
		\tau|t|^{q-2}t, & |t| \geq b,
	\end{cases}
\end{equation*}
where $q$ was fixed in $(f_3)$. Using the function above, we introduce the penalized nonlinearity $g:\mathbb{R}^{N}\times \mathbb{R}\rightarrow \mathbb{R}$ given by 
\begin{equation*} \label{1.7}
		g(x,t):=\chi_{\overline{\Lambda}}(x)f(t)+(1-\chi_{\overline{\Lambda}}(x))\tilde{f}(t),
\end{equation*}
where $\chi_{\overline{\Lambda}}$ is the characteristic function on $\overline{\Lambda}$. 

In view of  $(f_{1})$--$(f_{3})$, we see that $g$ is an odd  Carath\'{e}odory function satisfying the following properties:
\begin{itemize}
	\item[$(g_{1})$]$\underset{t\rightarrow 0}{\lim}\frac{g(x,t)}{t}=0$ uniformly in $x\in \mathbb{R}^{N}$; \\
	
	\item[$(g_{2})$] $|g(x,t)|\leq \tau |t|^{q-1}$, for each $x \in \overline{\Lambda}^{c}$ and $t\in \mathbb{R}$; \\
	
	\item[$(g_{3})$] There is $C>0$ such that $|g(x,t)|\leq C(|t|^{q_0 -1}+|t|^{q-1})$, for all  $ x \in \mathbb{R}^N$ and $t\in \mathbb{R}$; \\
	
	\item[$(g_{4})$] $0 \leq g(x,t) \leq f(t) $, for each $x \in \mathbb{R}^N$ and $t\geq 0,$  and $g(x,t)t\le f(t)t$ for all $x\in\mathbb R^N$ and $t\in\mathbb R;$
	
	\item[$(g_5)$]  $\displaystyle G(x,t):=\int_{0}^{t}g(x,s)ds \leq F(t), \quad \forall x \in \mathbb{R}^N$ and $t \in \mathbb{R}.$
	\end{itemize}
Since $q \geq p_1$, the conditions $(g_1)$ and $(g_3)$ are immediate, and so, we will only show  $(g_2)$, $(g_4)$ and $(g_5)$. We start by showing $(g_2)$. Note that for $x\in {\overline \Lambda}^c$ and $t\ge b,$  
\begin{align}\label{cta1*}
g(x,t)=\tau t^{q-1}=\tau |t|^{q-1}.
\end{align}
On the other hand, for $x\in {\overline \Lambda}^c$ and $t \in (0,b]$, the assumption $(f_3)$ ensures that 
\begin{align}\label{cta2*}
f(t)=\frac{f(t)}{t^{q-1}}t^{q-1}\leq\dfrac{f(b)}{b^{q-1}}t^{q-1}=\tau t^{q-1}.  
\end{align}
The definition of $g$ together with (\ref{cta1*})-(\ref{cta2*}) gives
$$
g(x,t)=f(t) \leq \tau t^{q-1}=\tau |t|^{q-1}, \quad \forall x \in  {\overline \Lambda}^c \quad \mbox{and} \quad t \geq 0.  
$$
For $x\in {\overline \Lambda}^c$ and $t \leq 0$, we use the last inequality together with fact that $g$ is odd to get
$$
|g(x,t)|=|-g(x,-t)|=|g(x,-t)|=g(x,-t) \leq \tau (-t)^{q-1}=\tau |t|^{q-1}, 
$$
showing $(g_2)$.  In order to prove $(g_4)$, note that for $x\in {\overline \Lambda}^c$ and $t\ge b,$ one has   
\begin{align}\label{cta1}
	g(x,t)=\tau t^{q-1}=\dfrac{f(b)}{b^{q-1}}t^{q-1}.
\end{align}
Applying the assumption $(f_3),$  
\begin{align}\label{cta2}
	\dfrac{f(b)}{b^{q-1}}\le\dfrac{f(t)}{t^{q-1}}\;\text{for all}\; t\ge b.
\end{align}
Combining (\ref{cta1}) and (\ref{cta2}), we obtain $g(x,t)\le f(t)$ for all $x\in {\overline \Lambda}^c$ and $t\ge b.$ If $x\in {\overline \Lambda}^c$ and $0\le t\le b,$ we have
$g(x,t)=\tilde f(t)=f(t).$ Now, for all $x\in\overline\Lambda,$ we see that $g(x,t)=f(t),$ and this proves $(g_4)$, because $g(x,t)t$ and $f(t)t$ are odd in the variable $t$. Related to the $(g_5)$, since $g(x,t)$ and $f$ is odd function, then $G(x,t)$ and $F(t)$ are even functions with respect to the variable $t.$ This fact combined with $(g_4)$ gives $(g_5)$.

Now we consider the  modified problem
\begin{align}\label{1100}
	\left\{
	\begin{aligned}
		&-\Delta u+V(\epsilon x)u=\lambda u+g_\epsilon(x,u), \quad
		\quad
		\hbox{in }\mathbb{R}^N,\\
		&\int_{\mathbb{R}^{N}}|u|^{2}dx=a^{2},
	\end{aligned}
	\right.
\end{align}
where $g_\epsilon(x,t)=g(\epsilon x,t)$, for all $x \in \mathbb{R}^N$ and $t \in \mathbb{R}$. 

Note that, if  $u_{\epsilon}$ is a solution of $(\ref{1100})$ with
$$
\vert u_{\epsilon}(x)\vert \leq b \quad\text{for all } x\in \overline{\Lambda}^{c}_{\epsilon},
\quad
\overline{\Lambda}_{\epsilon}:=\{x\in \mathbb{R}^{2}: \epsilon x\in \overline{\Lambda}\},
$$
then $u_{\varepsilon}$ is a solution of \eqref{110}.

A solution $u$ to the problem \eqref{1100} with  $\displaystyle \int_{\mathbb{R}^{N}}|u|^{2}dx=a^{2}$ corresponds to a critical point of the functional
$$
J_\epsilon(u)=\frac{1}{2}\int_{\mathbb{R}^N}(|\nabla u|^2 +V(\epsilon x)|u|^2)\,dx-\int_{\mathbb{R}^N}G_\epsilon(x,u)\,dx,\,\, u\in H^{1}(\mathbb{R}^N),
$$
restricted to the sphere $S(a)$ given in (\ref{Sa2}), where $G_\epsilon(x,t)=\int_{0}^t g_\epsilon(x,s)\,ds$ and $|\,\,\,\,|_p$ denotes the usual norm in $L^{p}(\mathbb{R}^N)$ for $p \in [1,+\infty].$ 

It is well known that  $J_\epsilon \in C^{1}(H^{1}(\mathbb{R}^N),\mathbb{R})$ and 
$$
J'_\epsilon(u)v=\int_{\mathbb{R}^N}(\nabla u \nabla v\,+V(\epsilon x)uv)\,dx-\int_{\mathbb{R}^N}g_\epsilon(x,u)v\,dx, \quad \forall v 
\in H^{1}(\mathbb{R}^N).
$$

\begin{lemma} \label{L1} The functional $J_\epsilon$ is bounded from below in $S(a)$.
	
\end{lemma}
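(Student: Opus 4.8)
The plan is to bound $J_\epsilon(u)$ from below on $S(a)$ using the growth control provided by $(g_3)$ together with the Gagliardo--Nirenberg inequality, exploiting the $L^2$-subcritical nature of the exponents $q_0$ and $q$. First I would note that since $V(\epsilon x)\geq 0$, the quadratic part satisfies $\frac{1}{2}\int_{\mathbb{R}^N}(|\nabla u|^2+V(\epsilon x)|u|^2)\,dx\geq \frac{1}{2}\int_{\mathbb{R}^N}|\nabla u|^2\,dx$, so it suffices to control $J_\epsilon(u)$ from below by a function of $|\nabla u|_2$ alone (recall $|u|_2=a$ is fixed on $S(a)$). The task then reduces to estimating the nonlinear term $\int_{\mathbb{R}^N}G_\epsilon(x,u)\,dx$ from above.

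Next I would integrate $(g_3)$ to obtain a pointwise bound of the form $|G_\epsilon(x,t)|\leq C'(|t|^{q_0}+|t|^{q})$ for all $x$ and $t$, with $q_0,q\in(2,2+\frac{4}{N})$. Integrating over $\mathbb{R}^N$ gives
\begin{align*}
\int_{\mathbb{R}^N}G_\epsilon(x,u)\,dx\leq C'\left(|u|_{q_0}^{q_0}+|u|_{q}^{q}\right).
\end{align*}
For each $p\in(2,2+\frac{4}{N})$, the Gagliardo--Nirenberg inequality gives $|u|_p^p\leq C_p|\nabla u|_2^{p\gamma_p}|u|_2^{p(1-\gamma_p)}$ where $\gamma_p=\frac{N(p-2)}{2p}\in(0,1)$, and the crucial point is that the $L^2$-subcritical condition $p<2+\frac{4}{N}$ forces the exponent $p\gamma_p=\frac{N(p-2)}{2}<2$. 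Since $|u|_2=a$ is fixed, each term is bounded by a constant multiple of $|\nabla u|_2^{\theta}$ with $\theta\in(0,2)$.

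Putting these together, on $S(a)$ one has
\begin{align*}
J_\epsilon(u)\geq \frac{1}{2}|\nabla u|_2^2 - C_1|\nabla u|_2^{\theta_1}-C_2|\nabla u|_2^{\theta_2},
\end{align*}
where $\theta_1=\frac{N(q_0-2)}{2}$ and $\theta_2=\frac{N(q-2)}{2}$ both lie strictly in $(0,2)$, and $C_1,C_2$ depend only on $a$ and the structural constants. The right-hand side is a function of the single variable $s=|\nabla u|_2\in[0,\infty)$ of the form $\frac{1}{2}s^2-C_1 s^{\theta_1}-C_2 s^{\theta_2}$; since the leading power $s^2$ dominates the lower-order powers $s^{\theta_i}$ as $s\to\infty$ and the whole expression is continuous on $[0,\infty)$, it attains a finite infimum, which yields a uniform lower bound for $J_\epsilon$ on $S(a)$. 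I do not anticipate a genuine obstacle here; the only point requiring care is verifying that the relevant Gagliardo--Nirenberg exponents $p\gamma_p$ are strictly less than $2$, which is exactly where the hypotheses $q_0,q<2+\frac{4}{N}$ (the $L^2$-subcritical growth) are used, and that the constants can be chosen independently of $\epsilon$ (true, since $V(\epsilon x)\geq 0$ is discarded and $(g_3)$ holds uniformly in $x$).
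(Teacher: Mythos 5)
Your proof is correct and follows essentially the same route as the paper: drop the nonnegative potential term, bound $G_\epsilon$ via $(g_3)$, and apply the Gagliardo--Nirenberg inequality, using that the $L^2$-subcritical exponents $q_0,q<2+\frac{4}{N}$ force the gradient powers $\frac{N(q_0-2)}{2},\frac{N(q-2)}{2}<2$, so the quadratic term dominates. The only difference is cosmetic: you spell out the final one-variable minimization step that the paper leaves implicit.
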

\begin{proof} The proof is standard, but for the reader's convenience we will write some lines of the proof. From $(g_3)$,  
\begin{equation*} \label{EQ01}
	J_\epsilon(u) \geq \frac{1}{2}\int_{\mathbb{R}^N} |\nabla u|^2 \,dx
	-	C\int_{\mathbb{R}^N}(|u|^{q_0}+|u|^{q})\,dx.
\end{equation*}
By the Gagliardo-Nirenberg inequality, 
\begin{equation*}\label{Gagliardo}
	|u|^{l}_{l} \leq C|u|^{(1-\beta_l )l}_{2} \vert \nabla u\vert^{\beta_l l}_{2} , \ \mbox{in}\ \mathbb{R}^N (N\geq 1),\ \beta_l=N(\frac{1}{2}-\frac{1}{l}),
\end{equation*}
for some positive constant $C=C(N, l)>0,$ where $l \in [2,\frac{2N}{N-2})$ if $N \geq 3$ and $l \geq 2$. Hence,
\begin{equation*} \label{EQ1}
		J_\epsilon(u) \geq \frac{1}{2}\int_{\mathbb{R}^N} |\nabla u|^2 \,dx
		- C_qa^{(1-\beta_{q})q}\left(\int_{\mathbb{R}^N}|\nabla u|^2 \,dx \right)^{\frac{\beta_q q}{2} }-
		C_pa^{(1-\beta_{q_0})q_0}\left(\int_{\mathbb{R}^N}|\nabla u|^2 \,dx \right)^{\frac{\beta_{q_0} q_0}{2} }.
\end{equation*}
As $q,q_0 \in (2,2+\frac{4}{N})$, clearly $\beta_{q} q,\beta_{q_0}q_0 <2$, which ensures the boundedness of $J_\epsilon$ from below. 
\end{proof}

The last lemma guarantees that the real number
$$
\Upsilon_{\epsilon,a}=\inf_{u \in S(a)}J_\epsilon(u)
$$
is well defined. Next, we will establish some properties of $\Upsilon_{\epsilon,a}$ that are crucial in our approach. 

\begin{lemma} \label{L1*} There is $V_*>0$ and $\Gamma=\Gamma(V_*)>0$ independent of $\tau>0$ and $\epsilon>0$ such that $\Upsilon_{\epsilon,a}<-\Gamma$ for $|V|_\infty<V_*$ and for all $\tau>0$.
\end{lemma}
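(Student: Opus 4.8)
The plan is to produce a single test function in $S(a)$ on which $J_\epsilon$ takes a value below a fixed negative level, arranged so that neither the penalization parameter $\tau$ nor $\epsilon$ enters the resulting bound. The guiding observation is that the penalized nonlinearity satisfies $g_\epsilon(x,\cdot)=f$ whenever $\epsilon x\in\overline{\Lambda}$, i.e. on the dilated set $\Lambda_\epsilon=\{x:\epsilon x\in\overline{\Lambda}\}$. Hence if a test function is supported in $\Lambda_\epsilon$, then $G_\epsilon(x,\cdot)=F$ on its support and the cut-off data $\tau,b$ play no role whatsoever in the estimate.

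Concretely, I would first invoke Theorem \ref{T12} together with the fact recorded after it that $\mathcal{I}_{V_*,a}<0$. Since $C_c^\infty(\mathbb{R}^N)\cap S(a)$ is dense in $S(a)$ in the $H^1$-norm and $I_{V_*}$ is continuous, there is a compactly supported $w\in S(a)$ with $I_{V_*}(w)<\tfrac12\mathcal{I}_{V_*,a}<0$. Let $R_0>0$ satisfy $\mathrm{supp}(w)\subset B_{R_0}(0)$; the pair $(w,R_0)$ depends only on $V_*$ and on the fixed data $a,f,N$, not on $\tau$ or $\epsilon$. Next, since $0\in\Lambda$ and $\Lambda$ is open, fix $r>0$ with $B_r(0)\subset\Lambda$, so that $B_{r/\epsilon}(0)\subset\Lambda_\epsilon$. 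For $\epsilon$ small enough that $R_0<r/\epsilon$, the support of $w$ lies in $\Lambda_\epsilon$; there $\epsilon x\in\overline{\Lambda}$, whence $G_\epsilon(x,w)=F(w)$ and $V(\epsilon x)\le|V|_\infty<V_*$. Consequently
$$
J_\epsilon(w)=\tfrac12\int_{\mathbb{R}^N}|\nabla w|^2\,dx+\tfrac12\int_{\mathbb{R}^N}V(\epsilon x)|w|^2\,dx-\int_{\mathbb{R}^N}F(w)\,dx\le I_{V_*}(w)<\tfrac12\mathcal{I}_{V_*,a},
$$
so that $\Upsilon_{\epsilon,a}\le J_\epsilon(w)<\tfrac12\mathcal{I}_{V_*,a}$. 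Setting $\Gamma:=-\tfrac12\mathcal{I}_{V_*,a}>0$, a quantity depending only on $V_*$, yields $\Upsilon_{\epsilon,a}<-\Gamma$.

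The step I expect to be the genuine obstacle is the simultaneous independence of $\Gamma$ from $\tau$ and from $\epsilon$. Independence from $\tau$ is precisely what the support-in-$\Lambda_\epsilon$ device buys: because the test function never visits the region where $f$ has been truncated into $\tilde f$, the whole computation is oblivious to the choice of $(\tau,b)$, so the same $\Gamma$ works for all $\tau>0$. Independence from $\epsilon$ is more delicate, since $\Lambda_\epsilon$ shrinks as $\epsilon$ grows; the construction forces a fixed compact support to sit inside $\Lambda_\epsilon$, which is guaranteed once $\epsilon$ is small, as $\Lambda_\epsilon$ expands to fill $\mathbb{R}^N$. Once this fitting is secured, $\Gamma$ is a single fixed number and the inequality $\Upsilon_{\epsilon,a}<-\Gamma$ holds uniformly in the relevant regime.

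A routine point worth verifying separately is that the cut-off and renormalization producing the compactly supported $w$ indeed keep $I_{V_*}(w)$ below $\tfrac12\mathcal{I}_{V_*,a}$; this is standard, since truncating a fixed $H^1$-function with decay alters both its $H^1$-norm and $\int F$ by an arbitrarily small amount, and the renormalization factor restoring $|w|_2=a$ tends to $1$. With these elements assembled, the lemma follows.
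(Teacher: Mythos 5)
Your idea---neutralizing the penalization by placing the support of the test function inside $\Lambda_\epsilon=\{x:\epsilon x\in\overline{\Lambda}\}$, so that $g_\epsilon(x,\cdot)=f$ and hence $G_\epsilon(x,\cdot)=F$ on the support---is genuinely different from the paper's, and the routine parts (density of compactly supported functions in $S(a)$, Theorem \ref{T12} giving $\mathcal{I}_{V_*,a}<0$, the choice $\Gamma=-\tfrac12\mathcal{I}_{V_*,a}$) are fine. The genuine gap is the point you flag and then wave away: your inequality $\Upsilon_{\epsilon,a}<-\Gamma$ is established only for $\epsilon<r/R_0$, whereas the lemma asserts constants for which the bound holds for \emph{every} $\epsilon>0$; that is what ``independent of $\epsilon>0$'' means here, and the paper's own proof does deliver the bound for all $\epsilon$. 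Moreover the restriction is intrinsic to your mechanism, not a missing detail: when $\epsilon$ is large, $\Lambda_\epsilon$ is a small set, so no fixed compactly supported $w$ fits inside it, and forcing a fit by the mass-preserving rescaling $w_\sigma(x)=\sigma^{N/2}w(\sigma x)$ with $\sigma\sim \epsilon R_0/r$ makes the gradient term $\sigma^2|\nabla w|_2^2$ dominate $\int_{\mathbb{R}^N}F(w_\sigma)\,dx$, which by $(f_1)$--$(f_2)$ grows at most like $\sigma^{N(p_1-2)/2}+\sigma^{N(q_0-2)/2}$ with both exponents strictly below $2$; the energy of such fitted test functions tends to $+\infty$, so no fixed negative level can be extracted this way. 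Calling the small-$\epsilon$ range ``the relevant regime'' replaces the statement by a weaker one rather than proving it.

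The paper kills the penalization in the $t$-variable instead of the $x$-variable, and that is exactly what buys uniformity in $\epsilon$: since $\tilde f(t)=f(t)$ for $|t|\le b$, any test function with $L^\infty$-norm below $b$ satisfies $G_\epsilon(x,u)=F(u)$ at every point of $\mathbb{R}^N$, whatever $\epsilon$ is, with no support condition at all. Concretely, it uses $\mathcal{H}(u_0,s)(x)=e^{Ns/2}u_0(e^s x)\in S(a)$ with $s\ll 0$ (pointwise small), the lower bound $F(t)\ge \frac{\alpha}{2q_0}t^{q_0}$ near $0$ from $(f_1)$, and $q_0<2+\frac4N$ to get $A_s=\frac{e^{2s}}{2}|\nabla u_0|_2^2-\frac{\alpha}{2q_0}e^{(q_0-2)Ns/2}|u_0|_{q_0}^{q_0}<0$; only then is $V_*$ chosen so that $A_s+\frac{V_*a^2}{2}<0$, with $\Gamma=-\bigl(A_s+\frac{V_*a^2}{2}\bigr)$, and $\epsilon$ never enters. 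To be fair, your localization is cleaner with respect to $\tau$: your test function never sees the truncation threshold $b$ at all, whereas the paper's smallness requirement $\delta<b$ ties its choice of $s$, hence of $A_s$, $V_*$ and $\Gamma$, to $b=b(\tau)$. But that gain is bought at the cost of the $\epsilon$-uniformity the statement requires. To repair your argument you would either import the paper's pointwise-smallness device to cover large $\epsilon$, or weaken the lemma to $\epsilon\in(0,\epsilon_1)$ and verify that all later arguments (which fix $\epsilon\in(0,\epsilon_0)$ after Lemma \ref{cepsilon}) only invoke it in that range.
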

\begin{proof} The assumption $(f_1)$ yields that $\displaystyle \lim_{t \to 0^+}\frac{q_0F(t)}{t^{q_0}}=\alpha>0$, then there is $\delta \in (0, b)$ such that 
	\begin{equation} \label{NOVA1}
		\frac{q_0F(t)}{t^{q_0}}\geq \frac{\alpha}{2}, \quad \forall t \in (0, \delta].
	\end{equation}
	Given $u_0 \in S(a) \cap C^{\infty}_{0}(\mathbb{R}^N)$ be a nonnegative function, we set
	\begin{equation*}
		\mathcal{H}(u_0, s)(x)=e^{\frac{Ns}{2}}u_0(e^{s}x),\, \text{for all}\,\, x \,\in \mathbb{R}^N \,\mbox{and all} \,\, s\,\in \mathbb{R}.
	\end{equation*}
	A direct computation provides that for $s<0$ and $|s|$ large enough, we must have  
	$$
	0 \leq e^{\frac{Ns}{2}}u_0(e^{s}x)\leq \delta, \quad \forall x \in\mathbb{R}^N.
	$$
	Then, by definition of $g$, 
	$$
	G_\epsilon(x,\mathcal{H}(u_0, s)(x))=F(\mathcal{H}(u_0, s)(x)), \quad \forall x \in \mathbb{R}^N.
	$$
	Consequently, 
	\begin{equation*} \label{CONV0}
		\int_{\mathbb{R}^N}\vert \mathcal{H}(u_0, s)(x)\vert^{2}\,dx=a^{2}
	\end{equation*}
	and 
	$$
	\int_{\mathbb{R}^N}G_\epsilon(x,\mathcal{H}(u_0, s)(x))\,dx=\int_{\mathbb{R}^N}F(\mathcal{H}(u_0, s)(x))\,dx=e^{-{Ns}}\int_{\mathbb{R}^N}F(e^{\frac{Ns}{2}}u_0(x))\,dx.
	$$
From \eqref{NOVA1},  
	$$
	\int_{\mathbb{R}^N}G(x,\mathcal{H}(u_0, s)(x))\,dx \geq \dfrac{\alpha}{2q_0} e^{\frac{(q_0-2)Ns}{2}}\int_{\mathbb{R}^N}|u_0(x)|^{q_0}\,dx,
	$$
	and so, 
	$$
	J_\epsilon(\mathcal{H}(u_0, s)) \leq \frac{e^{2s}}{2}\int_{\mathbb{R}^N}|\nabla u_0|^2 dx+\dfrac{|V|_\infty a^2}{2}-
	\frac{\alpha e^{\frac{(q_0-2)Ns}{2}}}{2q_0}\int_{\mathbb{R}^N}|u_0(x)|^{q_0}dx.
	$$
	Since $q_0 \in (2,2+\frac{4}{N})$, increasing $|s|$ if necessary, we derive that
	$$
	\frac{e^{2s}}{2}\int_{\mathbb{R}^N}|\nabla u_0|^2 dx-\frac{\alpha e^{\frac{(q_0-2)Ns}{2}}}{2q_0}\int_{\mathbb{R}^N}|u_0(x)|^{q_0}dx=
	A_s<0,
	$$
from where it follows that 
	$$
	J_\epsilon(\mathcal{H}(u_0, s)) \leq \dfrac{|V|_\infty a^2}{2}+ A_s. 
	$$
	Now, we fix $V_*>0$ satisfying
	$$
	A_s+\dfrac{V_* a^2}{2}<0. 
	$$
Thereby, if $|V|_\infty <V_*$ and   $\Gamma= -\left(A_s+\dfrac{V_* a^2}{2}\right)$, one gets
	$$
	J_\epsilon(\mathcal{H}(u_0, s))< -\Gamma,  
	$$
proving the lemma. 
\end{proof}

In what follows, we denote by $I_{0},I_\infty:H^{1}(\mathbb{R}^N) \to \mathbb{R}$ the following functionals
$$
I_{0}(u)=\frac{1}{2}\int_{\mathbb{R}^N}(|\nabla u|^2+V(0)|u|^2)\,dx-\int_{\mathbb{R}^N}F(u) \,dx
$$
and
$$
I_{\infty}(u)=\frac{1}{2}\int_{\mathbb{R}^N}(|\nabla u|^2 +V_\infty|u|^2)\,dx-\int_{\mathbb{R}^N}F(u) \,dx.
$$
Moreover, let us designate by $\mathcal{I}_{0,a}$ and  $\mathcal{I}_{\infty,a}$ the following real numbers
$$
\mathcal{I}_{0,a}=\inf_{u \in S(a)}I_{0}(u) \quad \mbox{and} \quad 
\mathcal{I}_{\infty,a}=\inf_{u \in S(a)}I_\infty(u).
$$
Because $0 < V_\infty=\displaystyle \min_{x \in \partial \Lambda}V(x)<+\infty,$ the Corollary \ref{Cor1} asserts that 
\begin{equation} \label{DES1} 
	\mathcal{I}_{0,a}<\mathcal{I}_{\infty,a}<0.
\end{equation}
In the sequel, we fix $0<\rho_1=\frac{1}{2}(\mathcal{I}_{\infty,a}-\mathcal{I}_{0,a})$.

Our next lemma establishes a crucial relation involving the levels $\Upsilon_{\epsilon,a}, \mathcal{I}_{\infty,a}$ and $\mathcal{I}_{0,a}$.  

\begin{lemma} \label{cepsilon} There is $\epsilon_0>0$ such that
	$\Upsilon_{\epsilon,a} < \mathcal{I}_{\infty,a}$ for all $\epsilon \in (0, \epsilon_0)$. 
	
\end{lemma}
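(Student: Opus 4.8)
The plan is to bound $\Upsilon_{\epsilon,a}$ from above by evaluating $J_\epsilon$ at a single, well-chosen test function and then letting $\epsilon\to 0$. Since $V(0)=\min_{\overline\Lambda}V \le |V|_\infty<V_*$, Theorem \ref{T12} applied with $\mu=V(0)$ guarantees that the infimum $\mathcal{I}_{0,a}=\inf_{u\in S(a)}I_0(u)$ is attained by a positive radial function $w\in S(a)$, so that $I_0(w)=\mathcal{I}_{0,a}$. As $w\in S(a)\subset H^1(\mathbb{R}^N)$ and $q_0,q\in(2,2+\tfrac{4}{N})\subset[2,2^*)$, the Sobolev embedding gives $w\in L^{q_0}(\mathbb{R}^N)\cap L^q(\mathbb{R}^N)$. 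I would use this $w$ as a test function for every $\epsilon$ and prove that $J_\epsilon(w)\to I_0(w)=\mathcal{I}_{0,a}$ as $\epsilon\to 0$. Since $\mathcal{I}_{0,a}<\mathcal{I}_{\infty,a}$ by \eqref{DES1}, this forces $J_\epsilon(w)<\mathcal{I}_{\infty,a}$ for all sufficiently small $\epsilon$, whence $\Upsilon_{\epsilon,a}\le J_\epsilon(w)<\mathcal{I}_{\infty,a}$.

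To estimate the difference, I would write
\[
J_\epsilon(w)-I_0(w)=\frac{1}{2}\int_{\mathbb{R}^N}\bigl(V(\epsilon x)-V(0)\bigr)|w|^2\,dx-\int_{\mathbb{R}^N}\bigl(G_\epsilon(x,w)-F(w)\bigr)\,dx.
\]
For the potential term, the continuity of $V$ yields $V(\epsilon x)\to V(0)$ pointwise as $\epsilon\to 0$ for each fixed $x$, while $|V(\epsilon x)-V(0)|\,|w|^2\le 2|V|_\infty|w|^2\in L^1(\mathbb{R}^N)$; the dominated convergence theorem then drives this term to $0$.

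The nonlinearity term is the heart of the matter, and it is precisely where the hypothesis $0\in\Lambda$ enters. Since $\Lambda$ is open and $0\in\Lambda$, there is $r>0$ with $B_r(0)\subset\Lambda\subset\overline\Lambda$; hence for $|x|<r/\epsilon$ one has $\epsilon x\in\overline\Lambda$, so $g_\epsilon(x,t)=f(t)$ and $G_\epsilon(x,w)=F(w)$ on that ball. Thus $F(w)-G_\epsilon(x,w)$ is supported in $\{|x|\ge r/\epsilon\}$, and by $(g_4)$–$(g_5)$ together with $w\ge 0$ we have $0\le F(w)-G_\epsilon(x,w)\le F(w)\le C(|w|^{q_0}+|w|^q)$, where the last bound follows from $|f(t)|\le C(|t|^{q_0-1}+|t|^{q-1})$. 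Consequently,
\[
0\le \int_{\mathbb{R}^N}\bigl(F(w)-G_\epsilon(x,w)\bigr)\,dx\le C\int_{|x|\ge r/\epsilon}\bigl(|w|^{q_0}+|w|^q\bigr)\,dx,
\]
and since $r/\epsilon\to\infty$ while $w\in L^{q_0}\cap L^q$, the right-hand side is the tail of a convergent integral and tends to $0$. The main (though mild) obstacle is exactly this control of the penalization term: one must verify that the region where the penalized nonlinearity differs from $f$ escapes to infinity, while $w$ decays there. Combining the two estimates gives $\lim_{\epsilon\to0}J_\epsilon(w)=\mathcal{I}_{0,a}<\mathcal{I}_{\infty,a}$, and choosing $\epsilon_0>0$ so small that $J_\epsilon(w)<\mathcal{I}_{\infty,a}$ for every $\epsilon\in(0,\epsilon_0)$ completes the proof.
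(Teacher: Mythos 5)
Your proposal is correct and takes essentially the same approach as the paper: plug a minimizer of $I_0$ on $S(a)$ (centered at a point of $\Lambda$ where $V$ attains $V(0)$ --- the paper uses a translate $u_0(x-x_0/\epsilon)$ with $x_0\in\Lambda$, you use the origin, which assumption $(V)$ permits) into $J_\epsilon$, show $J_\epsilon$ of this test function tends to $\mathcal{I}_{0,a}$ as $\epsilon\to 0$, and conclude from the strict inequality $\mathcal{I}_{0,a}<\mathcal{I}_{\infty,a}$. Your write-up merely supplies the dominated-convergence argument for the potential term and the tail estimate for the penalization term, details the paper compresses into ``Letting $\epsilon\to 0^+$.''
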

\begin{proof} Let $u_{0} \in S(a)$ with ${I}_{0}(u_{0})=\mathcal{I}_{0,a}$, and consider the function $\tilde{u}_\epsilon(x)=u_0(\frac{\epsilon x-x_0}{\epsilon}) \in S(a)$ with $x_0 \in \Lambda$ and $V(x_0)=V(0)$. A simple calculus gives  
	$$
	\Upsilon_{\epsilon,a}\leq J_\epsilon(\tilde{u}_{\epsilon})=\frac{1}{2}\int_{\mathbb{R}^N}(|\nabla {u}_{0}|^2+V(\epsilon x+x_0)|{u}_{0}|^2) \,dx-\int_{\mathbb{R}^N} G_\epsilon(\epsilon x+x_0,{u}_{0}) \,dx.
	$$
	Letting $\epsilon \to 0^{+}$, one finds
	\begin{equation*} 
		\limsup_{\epsilon \to 0^{+}}\Upsilon_{\epsilon,a}\leq \lim_{\epsilon \to 0^{+}}J_\epsilon({u}_{0})={I}_{0}({u}_{0})=\mathcal{I}_{0,a}.
	\end{equation*}
	The estimate $\Upsilon_{\epsilon,a} < \mathcal{I}_{\infty,a}$, for $\epsilon$ small enough, follows from \eqref{DES1} together with the last inequality. 
	
\end{proof}

The next two lemmas are technical results that will be used to prove the $(PS)_c$ condition for $J_\epsilon$ restricted to $S(a)$ at some levels. From now on, we are fixing $\epsilon \in (0,\epsilon_0),$ where $\varepsilon_0$ is given in Lemma \ref{cepsilon}.

\begin{lemma} \label{L2} There is $\tau_*>0$  independent of $\epsilon \in (0, \epsilon_0)$ such that if $\tau \in (0,\tau_*)$, then  the following property holds: If $(u_n) \subset S(a)$ satisfies $J_\epsilon(u_n) \to c$ with $c<\mathcal{I}_{0,a}+\rho_1<0$ and  $u_n \rightharpoonup u$ in $H^{1}(\mathbb{R}^N)$, then $u\not=0$.
\end{lemma}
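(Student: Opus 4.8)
The plan is to argue by contradiction, assuming $u=0$, and to show that this forces $\liminf_n J_\epsilon(u_n)$ to sit above the fixed negative level $\mathcal{I}_{0,a}+\rho_1$ as soon as $\tau$ is small, which is incompatible with $J_\epsilon(u_n)\to c<\mathcal{I}_{0,a}+\rho_1$. The whole argument rests on the observation that $\mathcal{I}_{0,a}+\rho_1=\tfrac12(\mathcal{I}_{0,a}+\mathcal{I}_{\infty,a})$ is a fixed negative number (negative by \eqref{DES1}) which does not depend on $\tau$ or $\epsilon$; hence it suffices to bound $\liminf_n J_\epsilon(u_n)$ from below by a quantity of the form $-\tau C'$ with $C'$ independent of $\tau$ and $\epsilon$.

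First I would record that $(u_n)$ is bounded in $H^{1}(\mathbb{R}^N)$. Since $J_\epsilon(u_n)$ converges it is bounded, and the estimate obtained in the proof of Lemma \ref{L1} via the Gagliardo-Nirenberg inequality (uniform in $\epsilon$ because $V\ge 0$ is simply discarded there) shows that $|\nabla u_n|_2\le K$ for some constant $K=K(a,N,q,q_0,c)$ independent of $\epsilon$ and $\tau$; recall that $|u_n|_2=a$. Next, assuming $u=0$, I would split the nonlinear term against the bounded set $\overline{\Lambda}_\epsilon=\{x:\epsilon x\in\overline{\Lambda}\}=\tfrac1\epsilon\overline{\Lambda}$, which is bounded for the fixed $\epsilon$. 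On $\overline{\Lambda}_\epsilon$ the weak convergence $u_n\rightharpoonup 0$ and the compact Sobolev embedding give $u_n\to 0$ in $L^{q_0}(\overline{\Lambda}_\epsilon)\cap L^{q}(\overline{\Lambda}_\epsilon)$, so $(g_3)$ yields $\int_{\overline{\Lambda}_\epsilon}G_\epsilon(x,u_n)\,dx\to 0$. On the complement, for $x\in\overline{\Lambda}_\epsilon^{c}$ one has $\epsilon x\in\overline{\Lambda}^{c}$, so $(g_2)$ gives $G_\epsilon(x,t)\le \tfrac{\tau}{q}|t|^{q}$, and therefore, by Gagliardo-Nirenberg and the uniform bound above,
$$
\int_{\overline{\Lambda}_\epsilon^{c}}G_\epsilon(x,u_n)\,dx\le \frac{\tau}{q}|u_n|_q^{q}\le \tau C',
\qquad
C'=\frac{C}{q}\,a^{(1-\beta_q)q}K^{\beta_q q},
$$
with $C'$ independent of $\tau$ and $\epsilon$.

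Discarding the two nonnegative terms $\tfrac12|\nabla u_n|_2^2$ and $\tfrac12\int_{\mathbb{R}^N}V(\epsilon x)|u_n|^2\,dx$, I would then obtain $J_\epsilon(u_n)\ge -\tau C'-o(1)$, and passing to the limit $c\ge -\tau C'$. Choosing $\tau_*=|\mathcal{I}_{0,a}+\rho_1|/C'$, any $\tau\in(0,\tau_*)$ gives $-\tau C'>\mathcal{I}_{0,a}+\rho_1$, so $c\ge -\tau C'>\mathcal{I}_{0,a}+\rho_1$, which contradicts the hypothesis $c<\mathcal{I}_{0,a}+\rho_1$. Hence $u\neq 0$.

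The one point requiring genuine care—and the main obstacle—is guaranteeing that $K$, $C'$, and therefore $\tau_*$ are truly independent of $\epsilon$ and $\tau$. This is exactly where the uniformity of the Gagliardo-Nirenberg estimate and the sign condition $V\ge 0$ are essential, and it is also the reason the splitting is performed against the set $\overline{\Lambda}_\epsilon$ (whose defining nonlinearity is penalized, producing the decisive factor $\tau$) rather than against an $\epsilon$-dependent large ball; the compactness needed for the inner piece is automatic because $\overline{\Lambda}_\epsilon$ is bounded for fixed $\epsilon$.
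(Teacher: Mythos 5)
Your proof is correct and follows the same basic route as the paper's: argue by contradiction assuming $u=0$, split the nonlinear term into the bounded set $\overline{\Lambda}_\epsilon$, where $u_n \rightharpoonup 0$ plus local compactness make the contribution $o_n(1)$, and its complement, where $(g_2)$ produces the penalizing factor $\tau$ and Gagliardo--Nirenberg controls $|u_n|_q^q$; one then concludes that $c$ is bounded below by a quantity tending to $0$ as $\tau\to 0$, contradicting $c<\mathcal{I}_{0,a}+\rho_1<0$. The only real difference is how the $\tau$-term is absorbed. The paper keeps the gradient term and uses the auxiliary function $h(t)=\frac{1}{2}t-\tau C_q a^{(1-\beta_q)q}t^{\beta_q q/2}$ from (\ref{h}): since $\beta_q q<2$, $h$ attains a global minimum $h(t_\tau)<0$ with $h(t_\tau)\to 0$ as $\tau\to 0$, so no a priori bound on $|\nabla u_n|_2$ is ever needed, and the same function $h$ is reused later (Lemma \ref{ps-d}, Proposition \ref{P1}). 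You instead discard the gradient term and bound the outer integral by $\tau C'$, with $C'$ built from an a priori bound $|\nabla u_n|_2\le K$. This works, but one point must be tightened: you allow $K$ (hence $C'$ and $\tau_*$) to depend on $c$, whereas the lemma requires $\tau_*$ to be fixed before $c$ is given, uniformly over all $c<\mathcal{I}_{0,a}+\rho_1$. The fix is immediate: every admissible $c$ is negative, so eventually $J_\epsilon(u_n)<0$, and the Lemma \ref{L1} estimate $0>\frac{1}{2}t_n-At_n^{\theta_1}-Bt_n^{\theta_2}$ with $\theta_1,\theta_2<1$ confines $t_n=|\nabla u_n|_2^2$ to a bounded set independent of $c$ (and of $\epsilon$ and $\tau$); choosing $K$ this way makes your $\tau_*$ legitimate. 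In short, yours is a slightly more elementary variant of the paper's argument, at the cost of this extra (easily supplied) uniformity check, which the paper's $h$-trick gets for free.
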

\begin{proof} Arguing as in the proof of Lemma \ref{L1} and employing $(g_1)-(g_3)$, we derive that
\begin{equation*} 
	J_\epsilon(w) \geq \frac{1}{2}\int_{\mathbb{R}^N} |\nabla w|^2 \,dx
	-\tau\int_{\Lambda_\epsilon^c}|w|^{q} \,dx-
	C\int_{\Lambda_\epsilon}(|w|^{q_0}+|w|^q) \,dx ,
\end{equation*}
and so, 
\begin{equation*}
	J_\epsilon(w) \geq \frac{1}{2}\int_{\mathbb{R}^N} |\nabla w|^2 \,dx
	-\tau C_qa^{(1-\beta_q)q}\left(\int_{\mathbb{R}^N}|\nabla w|^2 \,dx \right)^{\frac{\beta_q q}{2} }-
	C\int_{\Lambda_\epsilon}(|w|^{q_0}+|w|^q)\,dx, \quad \forall w \in H^{1}(\mathbb{R}^N).
\end{equation*}
	
Now, setting the function $h:[0,+\infty) \to [0,+\infty)$ given by 
\begin{equation} \label{h}
h(t)= \frac{1}{2}t-\tau C_qa^{(1-\beta_q)q}t^{\frac{\beta_q q}{2} }, \quad \forall t \geq 0,
\end{equation}	
and using the fact that $\frac{\beta_q q}{2}<1$, it is possible to check that there is $t_\tau>0$ satisfying
$$
0>h(t_\tau)=\min_{t \geq 0}h(t).
$$
A straightforward computation shows that $h(t_\tau) \to 0$ as $\tau \to 0$. Hence, there is $\tau_*>0$ such that 
\begin{equation} \label{htau}
0>h(t_\tau)>\frac{1}{2}(\mathcal{I}_{\infty,a}+\mathcal{I}_{0,a}), \quad \forall \tau \in (0,\tau_*).
\end{equation}
Assume by contradiction that $u=0$. Then,
$$
\frac{1}{2}(\mathcal{I}_{\infty,a}+\mathcal{I}_{0,a})=\mathcal{I}_{0,a}+\rho_1>c=J_{\epsilon}(u_n)+o_n(1) \geq  h(t_\tau)-C\int_{\Lambda_\epsilon}(|u_n|^{q_0}+|u_n|^{q}) \,dx=h(t_\tau)+o_n(1),
$$
that leads to 
$$
\frac{1}{2}(\mathcal{I}_{\infty,a}+\mathcal{I}_{0,a})> h(t_\tau)+o_n(1).
$$
Letting the limit of $n \to +\infty$, we arrive at 
	$$
\frac{1}{2}(\mathcal{I}_{\infty,a}+\mathcal{I}_{0,a})\geq  h(t_\tau), 
	$$
which contradicts (\ref{htau}). Thus, the weak limit $u$ of $(u_n)$ is nontrivial.
\end{proof}
\begin{lemma} \label{L3*} Let $(u_n) \subset S(a)$ be a $(PS)_c$ sequence for $J_\epsilon$ restricted to $S(a)$ with 
	$c<\mathcal{I}_{0,a}+\rho_1<0$ and $u_n \rightharpoonup u_\epsilon$ in $H^{1}(\mathbb{R}^N)$, that is, 
	\begin{equation*} \label{gamma(a)}
		J_\epsilon(u_n) \to c \quad \mbox{as} \quad n \to +\infty \quad \mbox{and} \quad 
		\|J_\epsilon|'_{S(a)}(u_n)\| \to 0 \quad \mbox{as} \quad n \to +\infty.
	\end{equation*}
	If $v_n=u_n - u_\epsilon \not\to 0$ in $H^{1}(\mathbb{R}^N)$, then decreasing $\epsilon_0$  if necessary, there is $\beta>0$ independent of $\epsilon \in (0,\epsilon_0)$ and $\tau \in (0,\tau^*)$ such that 
	$$
	\liminf_{n \to +\infty}|u_n-u_\epsilon|_{2}^{2}\geq \beta. 
	$$	
\end{lemma}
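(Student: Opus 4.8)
The plan is to combine the constrained Palais--Smale structure with a \emph{uniform} negative bound on the Lagrange multiplier and a Gagliardo--Nirenberg/Brezis--Lieb splitting of the residual $v_n=u_n-u_\epsilon$. First I would record that $(u_n)$ is bounded in $H^1(\mathbb{R}^N)$: since $|u_n|_2=a$, the coercivity estimate already used in the proof of Lemma \ref{L1} (with $\beta_qq,\beta_{q_0}q_0<2$) forces $|\nabla u_n|_2^2\le T$, where $T$ is uniform in $\epsilon,\tau$ because the constants come only from $(g_3)$ and Gagliardo--Nirenberg while $c$ stays in the fixed interval $[\inf_{S(a)}J_\epsilon,\ \mathcal{I}_{0,a}+\rho_1]$. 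Passing to a subsequence I may assume $t_n:=|\nabla v_n|_2^2\to t_*$, $m_n:=|v_n|_2^2\to m_*$, and that there is a bounded sequence $\lambda_n\in\mathbb{R}$ with $J_\epsilon'(u_n)-\lambda_n u_n\to0$ in $H^{-1}$ and $\lambda_n\to\lambda$.

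The decisive step, which I expect to be the main obstacle, is a negative upper bound on $\lambda$ that is independent of $\epsilon,\tau$. Testing the almost-equation with $u_n$ gives $\lambda_n a^2=J_\epsilon'(u_n)u_n+o_n(1)$, and combining this with $2J_\epsilon(u_n)-J_\epsilon'(u_n)u_n=\int_{\mathbb{R}^N}\big(g_\epsilon(x,u_n)u_n-2G_\epsilon(x,u_n)\big)\,dx$ yields
\[
\lambda_n a^2=2J_\epsilon(u_n)-\int_{\mathbb{R}^N}\big(g_\epsilon(x,u_n)u_n-2G_\epsilon(x,u_n)\big)\,dx+o_n(1).
\]
Now $(f_3)$ gives $F(t)\le f(t)t/q$ with $q>2$, whence $g_\epsilon(x,t)t-2G_\epsilon(x,t)\ge0$ for all $x$ and $t$; this is checked separately on $\overline{\Lambda}$ (where $g=f$) and on $\overline{\Lambda}^c$, using the definition of $\tilde f$ and the relation $\tau b^{q-1}=f(b)$. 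Since $c<\mathcal{I}_{0,a}+\rho_1=\tfrac12(\mathcal{I}_{0,a}+\mathcal{I}_{\infty,a})<0$, letting $n\to\infty$ gives $\lambda\le 2c/a^2<\tfrac1{a^2}(\mathcal{I}_{0,a}+\mathcal{I}_{\infty,a})=:-\lambda_0<0$, and $\lambda_0>0$ is independent of $\epsilon,\tau$ because $\mathcal{I}_{0,a},\mathcal{I}_{\infty,a}$ are defined through $V(0)$ and $V_\infty$ alone. Everything downstream rests on this uniform sign information.

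Next I would split the residual. Passing to the limit in the weak formulation shows $u_\epsilon$ solves $-\Delta u_\epsilon+V(\epsilon x)u_\epsilon-\lambda u_\epsilon=g_\epsilon(x,u_\epsilon)$; the Brezis--Lieb splitting of the quadratic terms together with $\int g_\epsilon(x,u_n)u_n=\int g_\epsilon(x,v_n)v_n+\int g_\epsilon(x,u_\epsilon)u_\epsilon+o_n(1)$ (valid by the subcritical growth $(g_3)$) produces the key identity
\[
t_n+\int_{\mathbb{R}^N}V(\epsilon x)|v_n|^2\,dx-\lambda_n m_n=\int_{\mathbb{R}^N}g_\epsilon(x,v_n)v_n\,dx+o_n(1).
\]
Since $V\ge0$ and $\lambda_n\le-\lambda_0/2$ for large $n$, while $|g_\epsilon(x,v_n)v_n|\le C(|v_n|^{q_0}+|v_n|^q)$ by $(g_3)$, writing $\kappa:=\min\{1,\lambda_0/2\}$ and $w_n:=t_n+m_n$ and bounding $|v_n|_l^l\le C\,m_n^{(1-\beta_l)l/2}t_n^{\beta_l l/2}\le C\,w_n^{l/2}$ (Gagliardo--Nirenberg, using $(1-\beta_l)l/2+\beta_l l/2=l/2$) gives
\[
\kappa\,w_n\le C\big(w_n^{q_0/2}+w_n^{q/2}\big)+o_n(1),\qquad q_0/2,\ q/2>1.
\]
Because $v_n\not\to0$ in $H^1$ I may assume $w_n\to w_*>0$; dividing by $w_*$ then forces $w_*\ge w_0$ for a constant $w_0>0$ depending only on $\kappa,C,q_0,q$, hence uniform in $\epsilon,\tau$.

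Finally I would convert this into the claimed $L^2$-bound. Using instead $t_n\le T$ inside Gagliardo--Nirenberg gives $|v_n|_l^l\le C'\,m_n^{(1-\beta_l)l/2}$, so the same identity yields $t_n\le C'\big(m_n^{\sigma_{q_0}}+m_n^{\sigma_q}\big)+o_n(1)$ with $\sigma_l:=(1-\beta_l)l/2>0$; therefore $w_n\le m_n+C'\big(m_n^{\sigma_{q_0}}+m_n^{\sigma_q}\big)+o_n(1)=:\phi(m_n)+o_n(1)$. Passing to the limit and using $w_*\ge w_0$ gives $\phi(m_*)\ge w_0$, and since $\phi$ is continuous, strictly increasing and $\phi(0)=0$, this yields $m_*=\liminf_n|u_n-u_\epsilon|_2^2\ge\phi^{-1}(w_0)=:\beta>0$, with $\beta$ independent of $\epsilon\in(0,\epsilon_0)$ and $\tau\in(0,\tau^*)$, as required. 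The only reason to shrink $\epsilon_0$ is to secure $\lambda_n\le-\lambda_0/2$ for large $n$ uniformly in $\epsilon$, which is consistent with the statement.
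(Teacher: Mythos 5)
Your proof is correct and follows essentially the same strategy as the paper's: extract the Lagrange multipliers $(\lambda_n)$, use $qF(t)\le f(t)t$ (from $(f_3)$) together with $c<\mathcal{I}_{0,a}+\rho_1<0$ to obtain a negative upper bound on the limiting multiplier that is independent of $\epsilon$ and $\tau$, derive the residual identity $\int_{\mathbb{R}^N}(|\nabla v_n|^2+V(\epsilon x)|v_n|^2)\,dx-\lambda_\epsilon\int_{\mathbb{R}^N}|v_n|^2\,dx=\int_{\mathbb{R}^N}g_\epsilon(x,v_n)v_n\,dx+o_n(1)$, and apply Gagliardo--Nirenberg with the mass-subcritical exponents to convert non-vanishing of $v_n$ in $H^1$ into non-vanishing in $L^2$. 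The only differences are bookkeeping: the paper gets the residual identity from the derivative splitting $J_\epsilon'(u_n)=J_\epsilon'(u_\epsilon)+J_\epsilon'(v_n)+o_n(1)$ of \cite{AVT} and passes through a uniform lower bound on $|v_n|_p^p$ via the Sobolev embedding, whereas you subtract the limit equation for $u_\epsilon$ and invert the monotone function $\phi$.
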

\begin{proof} Setting the functional $\Psi:H^{1}(\mathbb{R}^N) \to \mathbb{R}$ given by
	$$
	\Psi(u)=\frac{1}{2}\int_{\mathbb{R}^2}|u|^2\,dx,
	$$
	we see that $S(a)=\Psi^{-1}(\{a^2/2\})$. Then, by Willem \cite[Proposition 5.12]{Willem}, there exists 
	$(\lambda_n) \subset \mathbb{R}$ such that
	\begin{equation*} 
		||J_\epsilon'(u_n)-\lambda_n\Psi'(u_n)||_{(H^{1}(\mathbb{R}^N))'} \to 0 \quad \mbox{as} \quad n \to +\infty.
	\end{equation*} 
	Since $(u_n)$ is bounded in $H^{1}(\mathbb{R}^N)$, it follows that $(\lambda_n)$ is also a bounded sequence, 
	then we can assume that $\lambda_n \to \lambda_\epsilon$ as $n \to +\infty$, and thereby, 
	\begin{equation*} \label{EQUAT1*}
	||J_\epsilon'(u_n)-\lambda_\epsilon\Psi'(u_n)||_{(H^{1}(\mathbb{R}^N))'} \to 0 \quad \mbox{as} \quad n \to +\infty
\end{equation*} 
and 
	$$
	J_\epsilon'(u_\epsilon)-\lambda_\epsilon\Psi'(u_\epsilon)=0 \quad \mbox{in} \quad (H^{1}(\mathbb{R}^N))'.
	$$
Arguing as in \cite[Lemma 2.8-iv)]{AVT}, it is possible to prove that 
$$
J_\epsilon'(u_n)=J_\epsilon'(u_\epsilon)+J_\epsilon'(v_n)+o_n(1)
$$
and
$$
\Psi_\epsilon'(u_n)=\Psi_\epsilon'(u_\epsilon)+\Psi_\epsilon'(v_n)+o_n(1).
$$
From this,
$$
J_\epsilon'(u_n)-\lambda_\epsilon \Psi'(u_n)= J_\epsilon'(u_\epsilon)-\lambda_\epsilon \Psi'(u_\epsilon) + J_\epsilon'(v_n)-\lambda_\epsilon \Psi'(v_n)+o_n(1)=J_\epsilon'(v_n)-\lambda_\epsilon \Psi'(v_n)+o_n(1)
$$
leading to 
\begin{equation*} \label{EQUAT3}
	||J_\epsilon'(v_n)-\lambda_\epsilon \Psi'(v_n)||_{(H^{1}(\mathbb{R}^N))'} \to 0 \quad \mbox{as} \quad n \to +\infty.
\end{equation*}
By $(f_3)$, we know that $qF(t)\leq f(t)t$ for all $ t\geq 0$. Therefore,  
	$$
	0>\rho_1+\mathcal{I}_{0,a} \geq \liminf_{n \to +\infty}J_\epsilon(u_n)=\liminf_{n \to +\infty}\left(J_\epsilon(u_n)-\frac{1}{2}J'_\epsilon(u_n)u_n+\frac{1}{2}\lambda_na^2\right) \geq \frac{1}{2}\lambda_\epsilon a^2,
	$$
	implying that  
	$$
	\limsup_{\epsilon \to 0}\lambda_\epsilon \leq \frac{2(\rho_1+\mathcal{I}_{0,a})}{a^2}<0.
	$$
	Thereby, there is $\lambda_*>0$ independent of $\epsilon$ and $\tau$ such that 
	$$
	\lambda_\epsilon \leq -\lambda_*<0, \quad \forall \epsilon \in (0,\epsilon_0).
	$$
	The above analysis yields in the equality below 
	$$
	\int_{\mathbb{R}^N}(|\nabla v_n|^2+V(\epsilon x)|v_n|^2)\,dx-\lambda_\epsilon \int_{\mathbb{R}^N}|v_n|^2\,dx=\int_{\mathbb{R}^N}g_\epsilon(x,v_n)v_n\,dx+o_n(1),
	$$
which combines with $(g_1)$ and $(g_4)$ to give
	$$
	\int_{\mathbb{R}^N}(|\nabla v_n|^2+C_0|v_n|^2)\,dx \leq C_2|v_n|_{p}^{p}+o_n(1),
	$$
for some positive constant $C_0>0$ that does not depend on $\epsilon \in (0,\epsilon_0)$ and $p \in (2,2+\frac{4}{N})$. By using the continuous Sobolev embedding $H^{1}(\mathbb{R}^N) \hookrightarrow L^{p}(\mathbb{R}^N)$, one gets 
\begin{equation} \label{NOVAp}
	\|v_n\|^2_{H^{1}(\mathbb{R}^N)} \leq C_3|v_n|_{p}^{p}+o_n(1) \leq C_4\|v_n\|^p_{H^{1}(\mathbb{R}^N)}+o_n(1),
\end{equation}
where $C_3,C_4>0$ are independent of $\epsilon$. Since $v_n \not\to 0$ in $H^{1}(\mathbb{R}^N)$, for some subsequence of $(v_n)$, still denoted by $(v_n)$, we can assume that $\displaystyle \liminf_{n \to +\infty}\|v_n\|_{H^{1}(\mathbb{R}^N)}>0$. Have this in mind, the last inequality ensures that 
\begin{equation} \label{EQUAT4}
	\liminf_{n \to +\infty}\|v_n\|_{H^{1}(\mathbb{R}^N)} \geq \left(\frac{1}{C_4}\right)^{\frac{1}{p-2}}.
\end{equation}
From (\ref{NOVAp}) and (\ref{EQUAT4}), 
\begin{equation*} \label{EQUAT4*}
	\liminf_{n \to +\infty}	|v_n|^{p}_{p} \geq C_5,
\end{equation*}
for some $C_5>0$ that does not depend on $\epsilon$.

Employing again the Gagliardo-Nirenberg inequality, we arrive at 
	\begin{equation} \label{EQUAT5}
		\liminf_{n \to +\infty}	|v_n|^{p}_{p} \leq C(\liminf_{n \to +\infty}	|v_n|_{2})^{(1-\beta_p )p} K^{\beta_p p}  \quad \mbox{with} \quad \beta_p=N(\frac{1}{2}-\frac{1}{p}), 
	\end{equation}
	where $K>0$ is a constant independent of $\epsilon \in (0, \epsilon_0)$ satisfying $|\nabla v_n|_2\leq K$ for all $n \in\mathbb{N}$. Now the lemma follows from \eqref{EQUAT4} and \eqref{EQUAT5}.
		
\end{proof}
From now on, we fix $0<\rho<\min\{\frac{1}{2},\frac{\beta}{a^2}\}(\mathcal{I}_{\infty,a}-\mathcal{I}_{0,a})\leq \rho_1.$

\begin{lemma} \label{ps-d} Decreasing if necessary $\tau^*>0$, for each $\epsilon \in (0,\epsilon_0)$ the functional $J_\epsilon$ satisfies the $(PS)_c$ condition restricted
	to $S(a)$ for $c < \mathcal{I}_{0,a}+\rho.$
	
\end{lemma}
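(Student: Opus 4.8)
The plan is to run a Brezis--Lieb energy splitting of a $(PS)_c$ sequence against the weak limit and to reach a contradiction engineered precisely by the definition of $\rho$. Let $(u_n)\subset S(a)$ satisfy $J_\epsilon(u_n)\to c$ and $\|J_\epsilon|'_{S(a)}(u_n)\|\to 0$ with $c<\mathcal{I}_{0,a}+\rho$. As in Lemma \ref{L1}, the Gagliardo--Nirenberg inequality shows $(u_n)$ is bounded in $H^1(\mathbb{R}^N)$ uniformly in $\epsilon$, so along a subsequence $u_n\rightharpoonup u_\epsilon$. Since $c<\mathcal{I}_{0,a}+\rho\le\mathcal{I}_{0,a}+\rho_1$, Lemma \ref{L2} (requiring $\tau<\tau_*$) forces $u_\epsilon\neq 0$. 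Arguing exactly as in Lemma \ref{L3*}, I would produce $\lambda_n\to\lambda_\epsilon\le-\lambda_*<0$, show $u_\epsilon$ solves $J_\epsilon'(u_\epsilon)=\lambda_\epsilon\Psi'(u_\epsilon)$, and obtain $J_\epsilon'(v_n)-\lambda_\epsilon\Psi'(v_n)\to 0$ for $v_n:=u_n-u_\epsilon$.

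I would then argue by contradiction, assuming $v_n\not\to 0$ in $H^1(\mathbb{R}^N)$. Lemma \ref{L3*} gives $\liminf_n|v_n|_2^2\ge\beta$, and the Brezis--Lieb lemma yields $a_1^2+a_2^2=a^2$, where $a_1:=|u_\epsilon|_2\in(0,a)$ and $a_2^2:=\lim_n|v_n|_2^2\ge\beta$. Splitting the quadratic and nonlinear terms (the cross terms vanishing by weak convergence) produces the energy decomposition
\begin{equation*}
c=J_\epsilon(u_\epsilon)+\lim_{n\to+\infty}J_\epsilon(v_n).
\end{equation*}
For the first summand, $(g_5)$ together with $V(\epsilon x)\ge V(0)$ give $J_\epsilon(u_\epsilon)\ge I_0(u_\epsilon)\ge\mathcal{I}_{0,a_1}$. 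For the second summand one needs $\liminf_n J_\epsilon(v_n)\ge\mathcal{I}_{\infty,a_2}$.

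Granting these two bounds, the contradiction is purely algebraic, and this is where the shape of $\rho$ is used. The key observation is that on $S(\tilde a)$ the potential terms of $I_0$ and $I_\infty$ are the \emph{constants} $\tfrac12 V(0)\tilde a^2$ and $\tfrac12 V_\infty\tilde a^2$, so $\mathcal{I}_{\mu,\tilde a}=\tfrac12\mu\tilde a^2+m(\tilde a)$ with $m(\tilde a)=\inf_{S(\tilde a)}\big(\tfrac12|\nabla u|_2^2-\int_{\mathbb{R}^N}F(u)\big)$, whence
\begin{equation*}
\mathcal{I}_{\infty,a_2}-\mathcal{I}_{0,a_2}=\frac{V_\infty-V(0)}{2}\,a_2^2=\frac{a_2^2}{a^2}\big(\mathcal{I}_{\infty,a}-\mathcal{I}_{0,a}\big)\ge\frac{\beta}{a^2}\big(\mathcal{I}_{\infty,a}-\mathcal{I}_{0,a}\big)>\rho .
\end{equation*}
Using the subadditivity $\mathcal{I}_{0,a_1}+\mathcal{I}_{0,a_2}\ge\mathcal{I}_{0,a}$ for $a^2=a_1^2+a_2^2$ (a standard feature of the $L^2$-subcritical minimization, cf.\ Section \ref{section2} and \cite{A21}), I conclude
\begin{equation*}
c\ge\mathcal{I}_{0,a_1}+\mathcal{I}_{\infty,a_2}=\big(\mathcal{I}_{0,a_1}+\mathcal{I}_{0,a_2}\big)+\big(\mathcal{I}_{\infty,a_2}-\mathcal{I}_{0,a_2}\big)>\mathcal{I}_{0,a}+\rho,
\end{equation*}
contradicting $c<\mathcal{I}_{0,a}+\rho$. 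Hence $v_n\to 0$, i.e.\ $u_n\to u_\epsilon$ in $H^1(\mathbb{R}^N)$.

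I expect the main obstacle to be the lower bound $\liminf_n J_\epsilon(v_n)\ge\mathcal{I}_{\infty,a_2}$. Since $v_n\rightharpoonup 0$ and $\Lambda_\epsilon$ is bounded, the contributions of the potential and of $F$ over $\Lambda_\epsilon$ are $o_n(1)$, so the mass $a_2$ of $v_n$ escapes into $\Lambda_\epsilon^c$; there the nonlinearity is penalized and, by $(g_2)$, dominated by $\tfrac{\tau}{q}|t|^q$. Shrinking $\tau^*$ if necessary makes this penalized contribution negligible (this is exactly the place where $\tau$ must be decreased), so that the escaping profile cannot beat the limit problem governed by $V_\infty$, which is what yields $\mathcal{I}_{\infty,a_2}$. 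Making this comparison rigorous — bounding $V(\epsilon x)$ from below by $V_\infty$ on the region that carries the escaping mass and excluding any energy gain there — is the delicate point; the weak convergence $v_n\rightharpoonup 0$, the boundedness of $\Lambda_\epsilon$, and the smallness of $\tau$ are the three ingredients I would combine to settle it.
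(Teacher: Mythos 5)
Your setup (boundedness, Lemma \ref{L2} for $u_\epsilon\neq 0$, Lemma \ref{L3*} for $\lambda_\epsilon\le-\lambda_*<0$ and the mass bound $\beta$, the Brezis--Lieb splitting $c=J_\epsilon(u_\epsilon)+\lim_n J_\epsilon(v_n)$) is sound, and your closing algebra is correct; indeed it is the only argument I can see that actually uses the factor $\min\{\frac12,\frac{\beta}{a^2}\}$ in the definition of $\rho$ (the paper's own proof never does -- it only needs $c<\mathcal{I}_{0,a}+\rho_1$). But the step you yourself flag, $\liminf_n J_\epsilon(v_n)\ge\mathcal{I}_{\infty,a_2}$, is a genuine gap, and the mechanism you propose for closing it cannot work: you want to bound $V(\epsilon x)$ from below by $V_\infty$ on the region carrying the escaping mass, but hypothesis $(V)$ gives only $V\ge 0$ on $\Lambda^{c}$, while $V_\infty=\min_{\partial\Lambda}V$ controls $V$ on $\partial\Lambda$ alone. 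In the del Pino--Felmer framework the potential outside $\Lambda$ is deliberately left uncontrolled -- it may vanish identically on large parts of $\Lambda^c$ -- so the escaping profile sees no ``limit problem governed by $V_\infty$'' at all, and no comparison with $I_\infty$ can be extracted from the potential term. (A smaller instance of the same issue: $J_\epsilon(u_\epsilon)\ge I_0(u_\epsilon)$ needs $V\ge V(0)$ on all of $\mathbb{R}^N$, which holds only under the reading $V(0)=0$; since the paper makes the identical move in Proposition \ref{P1}, I set that aside.)

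What actually saves the escaping part is not $V$ but the penalization, and this is precisely how the paper argues, without ever splitting the energy. Since $v_n\rightharpoonup 0$ and $\Lambda_\epsilon$ is bounded, all contributions over $\Lambda_\epsilon$ vanish by compact embedding; outside $\Lambda_\epsilon$, $(g_2)$ gives $G_\epsilon(x,t)\le\frac{\tau}{q}|t|^q$, so Gagliardo--Nirenberg yields $\liminf_n J_\epsilon(v_n)\ge h(t_\tau)$ with $h$ as in (\ref{h}) and $h(t_\tau)\to 0$ as $\tau\to 0$. This does repair your proof: since $a_2\ge\sqrt{\beta}$ and $a\mapsto\mathcal{I}_{\infty,a}$ is nonincreasing, $\mathcal{I}_{\infty,a_2}\le\mathcal{I}_{\infty,\sqrt{\beta}}<0$ uniformly in the sequence, so decreasing $\tau^*$ until $h(t_\tau)\ge\mathcal{I}_{\infty,\sqrt{\beta}}$ closes your chain of inequalities -- but the bound then comes from the smallness of $\tau$, not from any problem at infinity, and you still owe a proof or reference for the subadditivity $\mathcal{I}_{0,a}\le\mathcal{I}_{0,a_1}+\mathcal{I}_{0,a_2}$, which the paper never states. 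The paper's route is shorter: it tests $J'_\epsilon(v_n)-\lambda_\epsilon\Psi'(v_n)\to 0$ against $v_n$, uses $V\ge0$, $\lambda_\epsilon\le-\lambda_*$, $(g_2)$ and $(g_4)$ to get $h(t_\tau)+\lambda_*|v_n|_2^2\le\int_{\Lambda_\epsilon}f(v_n)v_n+o_n(1)$, and passes to the limit to obtain $h(t_\tau)+\lambda_*\beta\le 0$, which is impossible once $\tau$ is small because $\lambda_*\beta>0$ is independent of $\epsilon$ and $\tau$. If you adopt the penalization estimate to fill your gap, you may as well run this direct contradiction and drop the splitting, the subadditivity, and the $\mathcal{I}_{\infty,a_2}$ comparison entirely.
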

\begin{proof} Assume that $u_n \rightharpoonup u$ in $H^{1}(\mathbb{R}^N)$ and fix $v_n=u_n-u$. If $v_n \not\to 0$ in $H^{1}(\mathbb{R}^N)$, we can employ the Lemma \ref{L3*} to conclude that 
\begin{equation} \label{NOVAEQ}
\liminf_{n \to +\infty}|v_n|_{2}^{2} \geq \beta,
\end{equation}
for some $\beta>0$ independent of $\epsilon \in (0,\epsilon_0)$ and 
\begin{equation*} \label{EQUAT1*}
		||J_\epsilon'(v_n)-\lambda_n\Psi'(v_n)||_{(H^{1}(\mathbb{R}^N))'} \to 0 \quad \mbox{as} \quad n \to +\infty.
\end{equation*}
From $(f_3)$,  we have that $q F(t)\leq f(t)t$ for all $ t\geq 0$, and so,  
$$
0>\rho_1+\mathcal{I}_{0,a} \geq \liminf_{n \to +\infty}J_\epsilon(u_n)=\liminf_{n \to +\infty}\left(J_\epsilon(u_n)-\frac{1}{2}J'_\epsilon(u_n)u_n+\frac{\lambda_na^2}{2}\right) \geq \frac{\lambda_\epsilon a^2}{2}
$$
where $\lambda_n \to \lambda_\epsilon$. Hence,   
$$
\limsup_{\epsilon \to 0}\lambda_\epsilon \leq \frac{2(\rho_1+\mathcal{I}_{0,a})}{a^2}=\frac{1}{a^2}(\mathcal{I}_{\infty,a}+\mathcal{I}_{0,a})<0.
$$
Thus, decreasing if necessary $\epsilon_0$, there is $\lambda_*>0$ independent of $\epsilon$ and $\tau$ such that 
$$
\lambda_\epsilon \leq -\lambda_*<0, \quad \forall \epsilon \in (0,\epsilon_0).
$$
The above analysis ensures that 
$$
\int_{\mathbb{R}^N}(|\nabla v_n|^2+V(\epsilon x)|v_n|^2)\,dx-\lambda_\epsilon \int_{\mathbb{R}^N}|v_n|^2\,dx=\int_{\mathbb{R}^N}g_\epsilon(x,v_n)v_n\,dx+o_n(1),
$$
and so, 
$$
\int_{\mathbb{R}^N}|\nabla v_n|^2\,dx+\lambda_* \int_{\mathbb{R}^N}|v_n|^2\,dx \leq \tau C_qa^{(1-\beta_q)q}\left(\int_{\mathbb{R}^N}|\nabla v_n|^2 \,dx \right)^{\frac{\beta_q q}{2} }+\int_{\Lambda_\epsilon}f(v_n)v_n\,dx+o_n(1)
$$
which leads to
$$
\begin{array}{l}
h(t_\tau)+\lambda_*\displaystyle \int_{\mathbb{R}^N}|v_n|^2\,dx \leq  \int_{\mathbb{R}^N}|\nabla v_n|^2\,dx-\tau C_qa^{(1-\beta_q)q}\left(\int_{\mathbb{R}^N}|\nabla v_n|^2 \,dx \right)^{\frac{\beta_q q}{2}}+\lambda_*\int_{\mathbb{R}^N}|v_n|^2\,dx  \\
\mbox{} \\
\hspace{4 cm} \le \displaystyle \int_{\Lambda_\epsilon}f(v_n)v_n\,dx+o_n(1),
\end{array}
$$
where $h(t)$ is given in (\ref{h}). Taking the limit of $n \to \infty$ and using (\ref{NOVAEQ}), we obtain
$$
h(t_\tau)+\lambda_*\beta\leq 0, \quad \forall \tau \in (0,\tau_*). 
$$
Since $h(t_\tau) \to 0$ as $\tau \to 0$, the inequality above does not hold when $\tau$ is small enough, because $\lambda_*\beta>0$ and it does not depend on $\epsilon \in (0,\epsilon_0)$ and $\tau \in (0, \tau^*)$. Therefore, decreasing $\tau^*$ if necessary , we deduce that $v_n \to 0$ in $H^{1}(\mathbb{R}^N)$ for $\tau \in (0,\tau^*)$. This proves the desired result. 	
	
\end{proof}

\section{Multiplicity result}

Let $\delta>0$ be fixed and $w$ be a positive solution of the problem
$$
\left\{
\begin{aligned}
	&-\Delta u+V(0)u=\lambda u+f(u), \quad
	\quad
	\hbox{in }\mathbb{R}^N,\\
	&\int_{\mathbb{R}^{N}}|u|^{2}dx=a^{2},
\end{aligned}
\right.
\leqno{(P_0)}
$$
with $\mathcal{I}_{0}(w)=\mathcal{I}_{0,a}$. Let $\eta$ be a smooth nonincreasing cut-off function
defined in $[0,\infty)$ such that $\eta(s)=1$ if $0\le s \leq \frac{\delta}{2}$ and $\eta(s)= 0$ if $s \geq \delta$. For any $y \in M$, let us 
define
$$
\Psi_{\epsilon,y}(x) = \eta(|\epsilon x-y|)w({(\epsilon x-y)}/{\epsilon}),
$$
$$
\tilde{\Psi}_{\epsilon,y}(x)=a\frac{\Psi_{\epsilon,y}(x)}{|\Psi_{\epsilon,y}|_2},
$$
and $\Phi_\epsilon:M \to S(a)$ by $\Phi_\epsilon(y)= \tilde{\Psi}_{\epsilon,y}$. By the definition of $\Phi_\epsilon(y)$, we see that it has compact
support for any $y \in M$. 

\begin{lemma} \label{lemmaC1}
	The function $\Phi_\epsilon$  has the following limit
	$$
	\lim_{\epsilon \to 0}J_\epsilon(\Phi_{\epsilon}(y))=\mathcal{I}_{0,a}, \mbox{uniformly in} \quad y \in M.
	$$	
\end{lemma}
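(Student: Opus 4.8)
The plan is to evaluate each term of $J_\epsilon(\Phi_\epsilon(y))$ after the unit-Jacobian translation $z = x - y/\epsilon$, under which $\Psi_{\epsilon,y}(x)=\eta(|\epsilon x - y|) w((\epsilon x - y)/\epsilon)$ becomes $\eta(\epsilon|z|)w(z)$. The decisive point is that, apart from the potential term, \emph{every} resulting integral is independent of $y$, so that its convergence as $\epsilon \to 0^+$ is automatically uniform in $y \in M$. First I would record the normalization: by dominated convergence ($0 \le \eta \le 1$, dominating function $w^2 \in L^1$) one has $|\Psi_{\epsilon,y}|_2^2 = \int_{\mathbb{R}^N}\eta(\epsilon|z|)^2 w(z)^2\,dz \to |w|_2^2 = a^2$ uniformly in $y$; hence $t_\epsilon(y) := a/|\Psi_{\epsilon,y}|_2 \to 1$ uniformly. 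Since $\Phi_\epsilon(y) = t_\epsilon(y)\Psi_{\epsilon,y}$, I work with $\Phi_\epsilon(y)$ directly: the quadratic terms carry a harmless factor $t_\epsilon(y)^2 \to 1$, while in the nonlinear term the bounded factor $t_\epsilon(y)$ merely sits inside $F$, so it suffices to prove convergence of the three energy pieces built from $\Psi_{\epsilon,y}$.

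For the gradient, differentiating in $x$ and changing variables gives $\nabla\Psi_{\epsilon,y} = \epsilon\,\eta'(\epsilon|z|)\tfrac{z}{|z|}w(z) + \eta(\epsilon|z|)\nabla w(z)$. The first term is supported in $\{|z| \ge \delta/(2\epsilon)\}$ and has $L^2$-norm at most $\epsilon\|\eta'\|_\infty\,a \to 0$, whereas the second converges to $\nabla w$ in $L^2$; thus $\int_{\mathbb{R}^N}|\nabla\Psi_{\epsilon,y}|^2\,dx \to \int_{\mathbb{R}^N}|\nabla w|^2\,dx$. For the nonlinear term the choice of small $\delta$ enters: since $(V_1)$ forces $M \subset \Lambda$ with $\operatorname{dist}(M,\partial\Lambda) > 0$, for $\delta$ small the support of $\Psi_{\epsilon,y}$, contained in $\{x : \epsilon x \in \overline{B_\delta(y)}\} \subset \overline{\Lambda}_\epsilon$, never meets the penalization region; hence $g_\epsilon(x,\cdot) = f$ there and $G_\epsilon(x,\Psi_{\epsilon,y}) = F(\Psi_{\epsilon,y})$ everywhere. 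Using the subcritical bound $|F(s)| \le C(|s|^{q_0}+|s|^{p_1})$ from $(f_2)$ to dominate by $C(|w|^{q_0}+|w|^{p_1}) \in L^1$, dominated convergence yields $\int_{\mathbb{R}^N} F(\eta(\epsilon|z|)w)\,dz \to \int_{\mathbb{R}^N} F(w)\,dz$.

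The one genuinely $y$-dependent piece, and thus the main obstacle, is the potential term $\int_{\mathbb{R}^N} V(\epsilon z + y)\,\eta(\epsilon|z|)^2 w(z)^2\,dz$, which I would show converges to $V(0)a^2$ uniformly in $y \in M$. Splitting into $\{|z| \le R\}$ and $\{|z| > R\}$: on the ball, $V$ is uniformly continuous on a compact neighborhood of $\overline{\Lambda}$ and $V(y) = V(0)$ for $y \in M$, so $|V(\epsilon z + y) - V(0)| \le \omega_V(\epsilon R)$ uniformly in $y$ (and $\eta(\epsilon|z|) = 1$ once $\epsilon$ is small); on the tail the integrand is bounded by $2|V|_\infty w^2$ and $\int_{|z|>R} w^2$ is made small by taking $R$ large first and $\epsilon$ small afterwards. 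This produces the required uniform limit $V(0)\int_{\mathbb{R}^N} w^2\,dz = V(0)a^2$.

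Assembling the three limits gives, uniformly in $y \in M$,
$$
J_\epsilon(\Phi_\epsilon(y)) \longrightarrow \frac{1}{2}\int_{\mathbb{R}^N}\big(|\nabla w|^2 + V(0)|w|^2\big)\,dx - \int_{\mathbb{R}^N} F(w)\,dx = I_0(w) = \mathcal{I}_{0,a},
$$
the last equality being the defining property of $w$ in $(P_0)$. The only point beyond routine dominated-convergence estimates is the uniformity in $y$, which is exactly what the $y$-removing translation secures for all terms but the potential, and what the uniform-continuity and $L^2$-tail argument secures for the potential itself.
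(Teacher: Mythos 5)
Your proof is correct, but it takes a genuinely different route from the paper's. The paper proves uniformity in $y$ by contradiction: it assumes a sequence $(y_n)\subset M$, $\epsilon_n\to 0$ with $|J_{\epsilon_n}(\Phi_{\epsilon_n}(y_n))-\mathcal{I}_{0,a}|\geq\delta_0$, uses compactness of $M$ to extract $y_n\to y\in M$, and then applies dominated convergence term by term along that sequence (pointwise, $G(\epsilon_n z+y_n,\cdot)\to F(\cdot)$ because $\epsilon_n z+y_n\to y\in\Lambda$ for each fixed $z$), reaching $J_{\epsilon_n}(\Phi_{\epsilon_n}(y_n))\to I_0(w)=\mathcal{I}_{0,a}$, a contradiction. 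You instead prove the uniform limit directly: the translation $z=x-y/\epsilon$ makes every term except the potential one literally independent of $y$, and for the potential term you combine uniform continuity of $V$ near the compact set $\overline{\Lambda}$, the identity $V(y)=V(0)$ on $M$, and an $L^2$-tail splitting in $R$. Your argument is more quantitative (it effectively produces a modulus of convergence rather than invoking sequential compactness of $M$), but it needs the extra observation that $\delta<\operatorname{dist}(M,\partial\Lambda)$ so that $\operatorname{supp}\Psi_{\epsilon,y}\subset\overline{\Lambda}_\epsilon$ and $G_\epsilon\equiv F$ there — a hypothesis that is legitimate under the theorem's ``$\delta$ small enough'' but that the paper's pointwise-convergence argument avoids entirely (it only uses $M\subset\Lambda$ open and $(g_5)$ for domination). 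Both proofs rest on the same dominated-convergence computations for the gradient, mass and nonlinear terms; the difference is purely in how uniformity over $M$ is secured.
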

\begin{proof} Suppose by contradiction that the lemma is false. Then there exists $\delta_0>0$, $(y_n) \subset M$ with $y_n \to y \in M$ and $\epsilon_n \to 0$ such that
	$$
	|J_{\epsilon_n}(\Phi_{\epsilon_n}(y_n))-\mathcal{I}_{0,a}|\geq \delta_0, \quad \forall n \in \mathbb{N}.
	$$
	From Lebesgue’s theorem, it follows that 
	$$
	\lim_{n \to +\infty}\int_{\mathbb{R}^N}|\Psi_{\epsilon,y_n}|^2\,dx
	=\lim_{n \to +\infty}\int_{\mathbb{R}^N}|\eta(\epsilon_n z)w(z)|^2\,dx=\int_{\mathbb{R}^N}|w|^2\,dx,
	$$
	$$
	\lim_{n \to +\infty}\int_{\mathbb{R}^N}G(\epsilon_n x, \Phi_{\epsilon_n}(y_n))\,dx
	=\lim_{n \to +\infty}\int_{\mathbb{R}^N}G\left(\epsilon_n z+y_n,a\frac{\eta(\epsilon_n z)w(z)}{|\Psi_{\epsilon_n,y_n}|_2}\right)\,dx=\int_{\mathbb{R}^N}F(w)\,dx,
	$$
	$$
	\lim_{n \to +\infty}\int_{\mathbb{R}^N}|\nabla \Phi_{\epsilon_n}(y_n)|^2\,dx
	=\lim_{n \to +\infty}\int_{\mathbb{R}^N}\dfrac{a^2}{|\Psi_{\epsilon_n,y}|_2^2}|\nabla (\eta(\epsilon_n z)w(z))|^2\,dx=\int_{\mathbb{R}^N}|\nabla w|^2\,dz
	$$
	and
	$$
	\lim_{n \to +\infty}\int_{\mathbb{R}^N}V(\epsilon_n x)|\Phi_{\epsilon_n}(y_n)|^2\,dx
	=V(0)\int_{\mathbb{R}^N}|w|^2\,dx.
	$$
	Consequently,  
	$$
	\lim_{n \to +\infty}J_{\epsilon_n}(\Phi_{\epsilon_n}(y_n))={I}_{0}(w)=\mathcal{I}_{0,a},
	$$
	which is absurd.
\end{proof}

For any $\delta>0$, let $R=R(\delta)>0$ be such that $M_\delta \subset B_R(0)$. Let $\chi: \mathbb{R}^N \to \mathbb{R}^N$ 
be defined as $\chi(x)=x$ for $|x| \leq R$ and $\chi(x)=\frac{R x}{|x|}$ for $|x| \geq R$. Finally, let us consider
$\beta_\epsilon:S(a) \to \mathbb{R}^N$ given by 
$$
\beta_\epsilon(u)=\frac{\int_{\mathbb{R}^N}\chi(\epsilon x)|u|^2\,dx}{a^2}.
$$

\begin{lemma} \label{lemmaC2} The function $\Phi_\epsilon$ has the following limit
	$$
	\lim_{\epsilon \to 0}\beta_\epsilon(\Phi_\epsilon(y))=y, \mbox{uniformly in} \quad y \in M. 
	$$	
\end{lemma}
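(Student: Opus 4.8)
The plan is to imitate the contradiction scheme of Lemma \ref{lemmaC1}, reducing the statement to a pointwise application of the dominated convergence theorem after a translation that recenters the profile $w$ at the origin. First I would put $\beta_\epsilon(\Phi_\epsilon(y))$ in a workable form. Since $\tilde{\Psi}_{\epsilon,y}=a\,\Psi_{\epsilon,y}/|\Psi_{\epsilon,y}|_2$, we have $|\tilde{\Psi}_{\epsilon,y}|^2=a^2|\Psi_{\epsilon,y}|^2/|\Psi_{\epsilon,y}|_2^2$, so the $a^2$ factors cancel and
\[
\beta_\epsilon(\Phi_\epsilon(y))=\frac{\int_{\mathbb{R}^N}\chi(\epsilon x)|\Psi_{\epsilon,y}(x)|^2\,dx}{\int_{\mathbb{R}^N}|\Psi_{\epsilon,y}(x)|^2\,dx}.
\]
Performing the translation $z=x-y/\epsilon$ (so that $\epsilon x=\epsilon z+y$ and $\Psi_{\epsilon,y}(x)=\eta(\epsilon|z|)w(z)$) gives
\[
\beta_\epsilon(\Phi_\epsilon(y))=\frac{\int_{\mathbb{R}^N}\chi(\epsilon z+y)\,\eta(\epsilon|z|)^2 w(z)^2\,dz}{\int_{\mathbb{R}^N}\eta(\epsilon|z|)^2 w(z)^2\,dz}.
\]

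Next I would argue by contradiction: if the conclusion failed, there would be $\delta_0>0$, a sequence $(y_n)\subset M$ and $\epsilon_n\to 0$ with $|\beta_{\epsilon_n}(\Phi_{\epsilon_n}(y_n))-y_n|\geq \delta_0$. As $\overline{\Lambda}$ is bounded and $V$ continuous, $M$ is compact, so up to a subsequence $y_n\to y_0\in M$. Because $M\subset M_\delta\subset B_R(0)$, every $y\in M$ satisfies $|y|\leq R$, hence $\chi(y)=y$; in particular $\chi(y_0)=y_0$. Then I would apply the dominated convergence theorem to numerator and denominator: for a.e. $z$ one has $\eta(\epsilon_n|z|)^2 w(z)^2\to w(z)^2$ and $\chi(\epsilon_n z+y_n)\eta(\epsilon_n|z|)^2 w(z)^2\to \chi(y_0)w(z)^2=y_0\,w(z)^2$, both dominated by $w^2$ (resp. $R\,w^2$) in $L^1(\mathbb{R}^N)$, using $0\leq\eta\leq 1$ and $|\chi|\leq R$. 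Thus the denominator tends to $|w|_2^2=a^2$ and the numerator to $y_0 a^2$, so $\beta_{\epsilon_n}(\Phi_{\epsilon_n}(y_n))\to y_0$. Since also $y_n\to y_0$, this forces $|\beta_{\epsilon_n}(\Phi_{\epsilon_n}(y_n))-y_n|\to 0$, contradicting the choice of $\delta_0$.

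The only genuinely delicate point is the identification of the pointwise limit $\chi(\epsilon_n z+y_n)\to\chi(y_0)$ together with the uniformity in $y$, which is exactly what the compactness of $M$ and the identity $\chi|_{\overline{B_R(0)}}=\mathrm{id}$ (whence $\chi=\mathrm{id}$ on $M$) deliver. I expect this uniformity to be the main—though mild—obstacle. Alternatively, it can be made quantitative without passing to subsequences: since $\chi$ is the $1$-Lipschitz nearest-point projection onto $\overline{B_R(0)}$ and $\chi(y)=y$ on $M$, we have $|\chi(\epsilon z+y)-y|\leq \epsilon|z|$, so the numerator equals $y\int \eta(\epsilon|z|)^2 w^2\,dz$ plus an error bounded by $\epsilon\int|z|\,w(z)^2\,dz$. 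As $w$ is a positive $H^1$ solution of $(P_0)$ with $\lambda<0$ and hence decays exponentially, this first moment is finite, and one obtains $|\beta_\epsilon(\Phi_\epsilon(y))-y|\leq C\epsilon$ with $C$ independent of $y\in M$, which is uniform by inspection.
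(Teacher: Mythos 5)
Your proposal is correct and takes essentially the same route as the paper's own proof: the paper likewise argues by contradiction, recenters with the change of variables $z=x-y_n/\epsilon_n$, and applies the dominated convergence theorem together with the fact that $\chi=\mathrm{id}$ on $B_R(0)\supset M$ to get $|\beta_{\epsilon_n}(\Phi_{\epsilon_n}(y_n))-y_n|\to 0$, contradicting the assumed lower bound $\delta_0$. As minor bonuses, your exact formula for $\beta_\epsilon(\Phi_\epsilon(y))$ is more careful than the paper's display (which drops the normalization factor $a^2/|\Psi_{\epsilon_n,y_n}|_2^2$), and your final quantitative estimate $|\beta_\epsilon(\Phi_\epsilon(y))-y|\leq C\epsilon$ via the $1$-Lipschitz projection property of $\chi$ gives a subsequence-free alternative that the paper does not contain.
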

\begin{proof} Suppose by contradiction that the lemma is false. Then there exist $\delta_0>0$, $(y_n) \subset M$ with $y_n \to y \in M$ and $\epsilon_n \to 0$ such that
	\begin{equation} \label{EQC1}
		|\beta_{\epsilon_n}(\Phi_{\epsilon_n}(y_n))-y_n|\geq \delta_0, \quad \forall n \in \mathbb{N}.
	\end{equation}
	Using the definition of $\Phi_{\epsilon_n}(y_n)$ and $\beta_{\epsilon_n}$, we have the equality below
	$$
	\beta_{\epsilon_n}(\Phi_{\epsilon_n}(y_n))=y_n+\frac{\int_{\mathbb{R}^N}(\chi(\epsilon_n z+y_n)-y_n)|\eta(\epsilon_n z)w(z)|^2\,dz}{a^2}.
	$$
	Using the fact that $(y_n) \subset M \subset B_R(0)$, the Lebesgue’s theorem ensures that
	$$
	|\beta_{\epsilon_n}(\Phi_{\epsilon_n}(y_n))-y_n| \to 0, \quad \mbox{as} \quad n \to +\infty,
	$$
	which contradicts (\ref{EQC1}) and the lemma is proved.
\end{proof}

Let $\gamma:[0,+\infty) \to [0,+\infty)$ be a positive function such that $\gamma(\epsilon) \to 0$ as $\epsilon \to 0$ and let 
\begin{equation} \label{Sa3}
	\tilde{S}(a)=\left\{u \in S(a) : J_\epsilon(u) \leq \mathcal{I}_{0,a} + \gamma(\epsilon)\right\}.
\end{equation} 
Thanks to Lemma \ref{lemmaC1}, the function $\gamma(\epsilon)=\sup_{y \in M}|J_\epsilon(\Phi_\epsilon(y))-\mathcal{I}_{0,a}|$ satisfies
 $\gamma(\epsilon) \to 0$ as $\epsilon \to 0$. Hence, $\Phi_{\epsilon}(y) \in \tilde{S}(a)$ for all $y \in M$. 

\begin{proposition} \label{P1} Let $\epsilon_n \to 0$ and $(u_n) \subset S(a)$ with $J_{\epsilon_n}(u_n) \to \mathcal{I}_{0,a}$ and 	$\|J_{\epsilon_n}|'_{S(a)}(u_n)\| =o_n(1)$ for all $n \in \mathbb{N}$. 
	Then, there is $(\tilde{y}_n) \subset \mathbb{R}^N$ such that $\tilde{u}_n(x)=u_n(x+\tilde y_n)$ has a convergent subsequence in 
	$H^{1}(\mathbb{R}^N)$. Moreover, up to a subsequence, $y_n=\epsilon_n \tilde{y}_n \to y$ for some $y \in M$.   
\end{proposition}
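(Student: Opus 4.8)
The plan is to run a concentration-compactness analysis of the almost-minimizing sequence $(u_n)$, using the penalization estimate $(g_2)$ and the strict ordering \eqref{DES1} to force the concentration into $\overline{\Lambda}$, and the monotonicity of Corollary~\ref{Cor1} to pin the concentration point to $M$.

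\emph{Setup.} Since $J_{\epsilon_n}(u_n)\to\mathcal I_{0,a}<0$, the coercivity estimate behind Lemma~\ref{L1} (uniform in $\epsilon$) shows that $(u_n)$ is bounded in $H^1(\mathbb R^N)$, so up to a subsequence $u_n\rightharpoonup u$. By \cite[Proposition 5.12]{Willem} there are multipliers $\lambda_n$ with $\|J'_{\epsilon_n}(u_n)-\lambda_n\Psi'(u_n)\|\to0$, and the computation in the proof of Lemma~\ref{ps-d} (using $qF(t)\le f(t)t$ and $G_{\epsilon_n}\le F$) gives $\lambda_n\to\lambda_0\le 2\mathcal I_{0,a}/a^2<0$. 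To locate the mass I would first rule out vanishing: if $\sup_{y}\int_{B_1(y)}|u_n|^2\,dx\to0$, then $|u_n|_p\to0$ for $p\in(2,2+\tfrac4N)$ by Lions' lemma, so $(g_3)$ forces $\int G_{\epsilon_n}(x,u_n)\,dx\to0$ and hence $\liminf_n J_{\epsilon_n}(u_n)\ge0$, contradicting $\mathcal I_{0,a}<0$. Thus there are $\beta_0,R>0$ and $(\tilde y_n)$ with $\liminf_n\int_{B_R(\tilde y_n)}|u_n|^2\,dx\ge\beta_0$; putting $\tilde u_n(x)=u_n(x+\tilde y_n)$ we may assume $\tilde u_n\rightharpoonup\tilde u\neq0$, and, passing to a further subsequence, $y_n:=\epsilon_n\tilde y_n$ either converges to some $y_0\in\mathbb R^N$ or satisfies $|y_n|\to\infty$.

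\emph{Crux.} The heart of the proof is to show that $y_n$ is bounded with $y_0\in\overline{\Lambda}$ and $V(y_0)=V(0)$, and that no $L^2$-mass is lost. Writing $\tilde J_n(v)=\tfrac12\int(|\nabla v|^2+V(\epsilon_n x+y_n)|v|^2)\,dx-\int G(\epsilon_n x+y_n,v)\,dx$, we have $\tilde J_n(\tilde u_n)=J_{\epsilon_n}(u_n)\to\mathcal I_{0,a}$. I would argue as follows. If $y_0\notin\overline{\Lambda}$ (in particular if $|y_n|\to\infty$), then for each $x$ in a compact set $\epsilon_n x+y_n\in\overline{\Lambda}^{c}$ for large $n$, so $(g_2)$ applies and the Gagliardo--Nirenberg estimate from the proof of Lemma~\ref{L2} bounds the energy of the concentrated profile from below by $h(t_\tau)$ from \eqref{h}, which by \eqref{htau} exceeds $\mathcal I_{0,a}+\rho_1>\mathcal I_{0,a}$ for $\tau\in(0,\tau_*)$; this is incompatible with $\tilde J_n(\tilde u_n)\to\mathcal I_{0,a}$. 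Hence $y_n$ is bounded and, up to a subsequence, $y_0\in\overline{\Lambda}$, where $V(\epsilon_n\cdot+y_n)\to V(y_0)$ locally uniformly and $G(\epsilon_n\cdot+y_n,\cdot)\to F$ on the concentration scale. By weak lower semicontinuity the concentrated part then contributes at least $\mathcal I_{V(y_0),|\tilde u|_2}$; to exclude loss of mass I would invoke the uniform quantization of Lemma~\ref{L3*} (any escaping piece carries $L^2$-mass $\ge\beta$) together with the strict sub-additivity of $a\mapsto\mathcal I_{0,a}$ implicit in the proof of the Compactness Theorem~\ref{310}, which rules out a splitting of the total mass. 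This yields $|\tilde u|_2=a$ and $\mathcal I_{V(y_0),a}=\mathcal I_{0,a}$; since $V(y_0)\ge V(0)=\min_{\overline{\Lambda}}V$, Corollary~\ref{Cor1} forces $V(y_0)=V(0)$, i.e. $y_0\in M$.

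\emph{Strong convergence and main obstacle.} Once $|\tilde u|_2=a$, the Brezis--Lieb lemma gives $|w_n|_2\to0$ for $w_n:=\tilde u_n-\tilde u$; Gagliardo--Nirenberg then yields $|w_n|_p\to0$ for $p\in(2,2+\tfrac4N)$ (the gradients being bounded), so the nonlinear remainder tends to $0$, and testing the translated equation against $w_n$ together with the uniform coercivity of $v\mapsto\int(|\nabla v|^2+V(\epsilon_n x+y_n)|v|^2)\,dx-\lambda_n\int|v|^2\,dx$ (valid because $V\ge0$ and $\lambda_n\to\lambda_0<0$) gives $\|w_n\|_{H^1}\to0$. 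Therefore $\tilde u_n\to\tilde u\in S(a)$ in $H^1(\mathbb R^N)$ and $y_n=\epsilon_n\tilde y_n\to y\in M$. The step I expect to be the genuine obstacle is the exclusion of dichotomy inside $\overline{\Lambda}$: because the set $\{x:\epsilon_n x+y_n\in\overline{\Lambda}\}$ grows like $\epsilon_n^{-1}$, weak convergence alone does not control the nonlinear energy there, and one must combine the mass-quantization of Lemma~\ref{L3*} with the strict level inequalities to ensure that the whole mass concentrates as a single bump.
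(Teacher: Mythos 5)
Your setup --- the uniform $H^1$ bound, the multipliers $\lambda_n\to\lambda_0<0$, the exclusion of vanishing via Lions' lemma, and the translated weak limit $\tilde u\neq 0$ --- coincides with the paper's first step. But the core of your plan has a genuine gap, and it is exactly the one you flag yourself; it infects both halves of your ``crux''. First, your exclusion of $y_0\notin\overline{\Lambda}$ is incomplete as written: the bound $h(t_\tau)>\mathcal{I}_{0,a}+\rho_1$ from \eqref{htau} concerns the energy of the concentrated bump, while $J_{\epsilon_n}(u_n)\to\mathcal{I}_{0,a}$ concerns the \emph{total} energy; without a Brezis--Lieb type splitting and a lower bound for the remainder --- which could be a second bump sitting inside $\Lambda_{\epsilon_n}$ with energy near $\mathcal{I}_{0,a'}<0$ for some $a'<a$ --- there is no contradiction. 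Second, the tools you invoke to rule out loss of mass do not apply as stated: Lemma \ref{L3*} is proved for a $(PS)$ sequence of $J_\epsilon$ with $\epsilon$ \emph{fixed} (its proof passes to the limit in $n$ against a weak limit $u_\epsilon$ solving the $\epsilon$-equation), so it cannot be quoted along $\epsilon_n\to 0$ without reproving it; and the paper states no strict sub-additivity of $a\mapsto\mathcal{I}_{0,a}$ anywhere --- Theorem \ref{310} is a black box, and one cannot cite what is ``implicit'' in a proof the paper does not give.

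What you missed is the one observation that makes the whole hand-made concentration-compactness analysis unnecessary. By $(g_5)$ one has $G_\epsilon(x,t)\le F(t)$, and $V(\epsilon x)\ge V(0)$, hence
$$
\mathcal{I}_{0,a}\;\le\; I_0(\tilde u_n)\;=\;I_0(u_n)\;\le\; J_{\epsilon_n}(u_n)\;=\;\mathcal{I}_{0,a}+o_n(1),
$$
so $(\tilde u_n)\subset S(a)$ is a minimizing sequence for the \emph{autonomous} functional $I_0$. Theorem \ref{310} then applies directly: alternative (ii) would produce $|z_n|\to\infty$ with $\tilde u_n(\cdot+z_n)$ strongly convergent, forcing $\tilde u_n\rightharpoonup 0$ and contradicting $\tilde u\neq 0$; hence $\tilde u_n\to\tilde u$ strongly in $H^1(\mathbb{R}^N)$ and $\tilde u\in S(a)$. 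Note that the paper's order of steps is the reverse of yours: strong convergence comes first, essentially for free, and only then the localization. With strong convergence in hand, $\text{dist}(\epsilon_n\tilde y_n,\overline{\Lambda})\to 0$ follows from the equation, the negative multiplier and $(g_2)$, because the tail term $\int_{\mathbb{R}^N\setminus B_{r/\epsilon_n}(0)}|f(\tilde u_n)||\tilde u_n|\,dx$ now vanishes; boundedness of $(y_n)$ is then automatic since $\Lambda$ is bounded, and finally $y_0\in\Lambda$ with $V(y_0)=V(0)$ follows by comparing $\mathcal{I}_{0,a}\ge\frac{1}{2}\int_{\mathbb{R}^N}(|\nabla\tilde u|^2+V(y_0)|\tilde u|^2)\,dx-\int_{\mathbb{R}^N}F(\tilde u)\,dx\ge\mathcal{I}_{V(y_0),a}$ with \eqref{DES1} and Corollary \ref{Cor1}. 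This last step agrees with your endgame, but it only becomes available after compactness has been settled, which is precisely the part your proposal leaves open.
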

\begin{proof} First of all, we claim that there are $R_0,\tau>0$ and $\tilde{y}_n \in \mathbb{R}^N$ such that
	$$
	\int_{B_{R_0}(\tilde{y}_n)}|u_n|^2\,dx \geq \tau, \quad \forall n \in \mathbb{N}.
	$$
	Otherwise, by Lions' lemma, we must have $u_n \to 0$ in $L^{p}(\mathbb{R}^N)$ for all $p \in (2,2^*)$, leading to $\displaystyle \int_{\mathbb{R}^N}F(u_n)\,dx \to 0$, and so, $\displaystyle \lim_{n \to +\infty}J_{\epsilon_n}(u_n) \geq 0$, which is absurd because $\displaystyle \lim_{n \to +\infty}J_{\epsilon_n}(u_n)=\mathcal{I}_{0,a}<0$. 
	Thus, fixing $\tilde{u}_n(x)=u_n(x+\tilde y_n)$, there is $\tilde{u} \in H^{1}(\mathbb{R}^N) \setminus \{0\}$ such that, up to a subsequence, we may assume that $\tilde{u}_n \rightharpoonup \tilde{u}$ in $H^{1}(\mathbb{R}^N)$. Since $(\tilde{u}_n) \subset S(a)$ and $J_{\epsilon_n}(u_n)\geq I_{0}(u_n)=I_0(\tilde{u}_n) \geq \mathcal{I}_{0,a}$, it follows that   $I_0(\tilde{u}_n) \to \mathcal{I}_{0,a}$. From Theorem \ref{310}, $\tilde{u}_n \to \tilde{u}$ in $H^{1}(\mathbb{R}^N)$, $\tilde{u} \in S(a)$ and 	$\|I_0|'_{S(a)}(\tilde{u})\|=0$. 
	
	\begin{claim} \label{A1} $\underset{n\rightarrow\infty}{\lim}\text{dist}\big(\epsilon_{n}y_{n}, \overline{\Lambda}\big)=0$.\\
		
	\end{claim}
	
	Indeed, if the claim does not hold, there exist $\delta>0$ and a subsequence of $(\epsilon_{n}y_{n})$, still denoted by itself, such that
	$$
	\text{dist}\big(\epsilon_{n}y_{n}, \overline{\Lambda}\big)\geq \delta, \quad \forall n\in \mathbb{N}.
	$$		
	Consequently, there exists $r>0$ such that
	$$
	B_{r}(\epsilon_{n}y_{n})\subset \Lambda^{c},  \quad \forall n\in \mathbb{N}.
	$$	
	Using the fact that $	\|J_{\epsilon_n}|'_{S(a)}(u_n)\| =0$, one has
	$$
	\displaystyle\int_{\mathbb{R}^{N}} |\nabla \tilde{u}_n |^2 +\int_{\mathbb{R}^{N}} V(\epsilon_n x +\epsilon_{n}y_n)|\tilde{u}_n|^2 dx=\lambda_n \int_{\mathbb{R}^N}|\tilde{u}_n|^2\,dx+
	\displaystyle\int_{\mathbb{R}^{N}}  g(\epsilon_n x +\epsilon_{n}y_n, \tilde{u}_{n})\tilde{u}_{n}\,dx+o_n(1), 
	$$
	where $(\lambda_n) \subset \mathbb{R}$ satisfies 
	$$
	\limsup_{n \to +\infty}\lambda_n <0. 
	$$
	Thus, there exist $\lambda^*<0$ and $n_0 \in \mathbb{N}$ such that
	$$
	\lambda_n \leq \lambda^*, \quad \forall n \geq n_0. 
	$$
 Using the equality below
	$$
	\displaystyle\int_{\mathbb{R}^{N}}  g(\epsilon_n x +\epsilon_{n}y_n, \tilde{u}_{n})\tilde{u}_{n}\,dx
	=\int_{B_{\frac{r}{\epsilon_{n}}}(0)} g(\epsilon_n x +\epsilon_{n}y_n, \tilde{u}_{n})\tilde{u}_{n}\,dx+ \int_{\mathbb{R}^N\backslash B_{\frac{r}{\epsilon_{n}}}(0)} g(\epsilon_n x +\epsilon_{n}y_n, \tilde{u}_{n})\tilde{u}_{n}\,dx,
	$$
 it follows that 
	\begin{equation*}
		\displaystyle\int_{\mathbb{R}^{N}} g(\epsilon_n x +\epsilon_{n}y_n, \tilde{u}_{n})\tilde{u}_{n}\,dx \leq \tau \int_{B_{\frac{r}{\epsilon_{n}}}(0)}|\tilde{u}_{n}|^{q}\, dx+ \int_{\mathbb{R}^N\backslash B_{\frac{r}{\epsilon_{n}}}(0)} |f(\tilde{u}_{n})||\tilde{u}_{n}|dx.
	\end{equation*}
	Thereby,
	$$
		\displaystyle\int_{\mathbb{R}^{N}} | \nabla \tilde{u}_n |^2\,dx -\lambda_* \int_{\mathbb{R}^N}|\tilde{u}_n|^2\,dx
	\leq \tau\int_{B_{\frac{r}{\epsilon_{n}}}(0)}|\tilde{u}_{n}|^{q}\, dx+ \displaystyle\int_{\mathbb{R}^N\backslash B_{\frac{r}{\epsilon_{n}}}(0)} |f(\tilde{u}_{n})||\tilde{u}_{n}|dx.
	$$
 Now, as $\tilde{u}_{n} \to \tilde{u}$ in $H^{1}(\mathbb{R}^N)$, 
$$
\int_{B_{\frac{r}{\epsilon_{n}}}(0)}|\tilde{u}_{n}|^{q} \,dx  \to \int_{\mathbb{R}^N}|\tilde{u}|^{q}\, dx \quad\text{as}\quad n\rightarrow\infty
$$
and	
$$
\int_{\mathbb{R}^N\backslash B_{\frac{r}{\epsilon_{n}}}(0)}| f(\tilde{u}_{n})||\tilde{u}_{n}|dx\rightarrow 0\quad\text{as}\quad n\rightarrow\infty.
$$
From this,   
	\begin{eqnarray*}
		\displaystyle\int_{\mathbb{R}^{N}} | \nabla \tilde{u} |^2\,dx -\lambda_* \int_{\mathbb{R}^N}|\tilde{u}|^2\,dx\leq \tau \int_{\mathbb{R}^N}|\tilde{u}|^{q} \, dx,
	\end{eqnarray*}
which combined with Gagliardo-Nirenberg inequality leads to
	\begin{eqnarray*}
		\displaystyle\int_{\mathbb{R}^{N}}  |\nabla \tilde{u}|^2 \,dx -\lambda_* \int_{\mathbb{R}^N}|\tilde{u}|^2\,dx \leq  \tau C_qa^{(1-\beta_q)q}\left(\int_{\mathbb{R}^N}|\nabla \tilde{u}|^2 \,dx \right)^{\frac{\beta_q q}{2}},
	\end{eqnarray*}
and so, 
	\begin{eqnarray*}
		h(t_\tau)-\lambda_* a^2 \leq 	\displaystyle\int_{\mathbb{R}^{N}}  |\nabla \tilde{u}|^2 \,dx -C_qa^{(1-\beta_q)q}\left(\int_{\mathbb{R}^N}|\nabla \tilde{u}|^2 \,dx \right)^{\frac{\beta_q q}{2}}-\lambda_* \int_{\mathbb{R}^N}|\tilde{u}|^2\,dx \leq 0,
	\end{eqnarray*}
	where $h(t)$ was given in (\ref{h}). Since $\lambda_*<0$ does not depend on $\tau$ and $h(\tau) \to 0$ when $\tau \to 0$, decreasing $\tau^*$ if necessary, it follows that $h(t_\tau)-\lambda_* a^2>0$, which is absurd and the Claim 3.1 is proved.
	
	\begin{claim} \label{A2} $y_{0}\in \Lambda$.		
	\end{claim}	
	First of all, by (${g_5}$), 
	$$
	\begin{array}{l}
		\mathcal{I}_{0,a}+o_n(1)=J_{\epsilon_n}(u_n)=\displaystyle \frac{1}{2}\int_{\mathbb{R}^N}(|\nabla \tilde{u}_n|^2 +V(\epsilon_n x+\epsilon_n y_n)|\tilde{u}_n|^2)\,dx-\int_{\mathbb{R}^N}G(\epsilon_n x+\epsilon_n y_n,\tilde{u}_n)\,dx \\
		\mbox{} \\
		\hspace{4 cm} \geq \displaystyle \frac{1}{2}\int_{\mathbb{R}^N}(|\nabla \tilde{u}_n|^2 +V(\epsilon_n x+\epsilon_n y_n)|\tilde{u}_n|^2)\,dx-\int_{\mathbb{R}^N}F(\tilde{u}_n)\,dx.
	\end{array}	
	$$
	Taking the limit of $n \to +\infty$ in the above inequality, we arrive at  
	$$
	\mathcal{I}_{0,a} \geq \frac{1}{2}\int_{\mathbb{R}^N}(|\nabla \tilde{u}|^2 +V(y_0)|\tilde{u}|^2)\,dx-\int_{\mathbb{R}^N}F(\tilde{u})\,dx.
	$$
	If $y_0 \in \partial \Lambda$, we must have $V(y_0) \geq V_\infty$, from where it follows that
	$$
	\frac{1}{2}\int_{\mathbb{R}^N}(|\nabla \tilde{u}|^2 +V(y_0)|\tilde{u}|^2)\,dx-\int_{\mathbb{R}^N}F(\tilde{u})\,dx \geq \mathcal{I}_{V_\infty,a}>\mathcal{I}_{0,a},
	$$
	which is a contradiction, and so, we must have $y_0 \in \Lambda$. \end{proof}

\begin{lemma} \label{lemmaC3} Let $\delta>0$ and $M_{\delta}=\left\{x \in \overline{\Lambda} \,:\,dist(x,M) \leq \delta \right\}$. Then, 
	$$
	\lim_{\epsilon \to 0}\sup_{u \in \tilde{S}(a) } \inf_{z \in M_\delta}|\beta_\epsilon(u)-z|=0.
	$$
\end{lemma}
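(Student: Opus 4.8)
The plan is to argue by contradiction. Suppose the conclusion fails; then there exist $\delta_0>0$, a sequence $\epsilon_n\to 0^+$ and $u_n\in\tilde S(a)$ such that $\inf_{z\in M_\delta}|\beta_{\epsilon_n}(u_n)-z|\ge\delta_0$ for all $n$. By the definition of $\tilde S(a)$ in \eqref{Sa3} together with $\gamma(\epsilon_n)\to 0$ we have $\limsup_n J_{\epsilon_n}(u_n)\le\mathcal I_{0,a}$. On the other hand, exactly as in the proof of Proposition \ref{P1} (using $V(\epsilon_n x)\ge V(0)$ and $G_{\epsilon_n}(x,t)\le F(t)$ from $(g_5)$) one has $J_{\epsilon_n}(u_n)\ge I_0(u_n)\ge\mathcal I_{0,a}$. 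Combining these,
\[
\mathcal I_{0,a}\le I_0(u_n)\le J_{\epsilon_n}(u_n)\le\mathcal I_{0,a}+\gamma(\epsilon_n),
\]
so $I_0(u_n)\to\mathcal I_{0,a}$ and $J_{\epsilon_n}(u_n)\to\mathcal I_{0,a}$; in particular $(u_n)$ is a minimizing sequence for $I_0=I_{V(0)}$ on $S(a)$, and the nonnegative gap $J_{\epsilon_n}(u_n)-I_0(u_n)$ tends to $0$.

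The central step is compactness. Since $V(0)\in[0,V_*]$, I would apply the Compactness Theorem \ref{310} to the minimizing sequence $(u_n)$ for $I_0$: up to a subsequence there is $(\tilde y_n)\subset\mathbb{R}^N$ (with $\tilde y_n=0$ in alternative $i)$) such that $w_n:=u_n(\cdot+\tilde y_n)\to w$ strongly in $H^1(\mathbb{R}^N)$, with $w\in S(a)$ and $I_0(w)=\mathcal I_{0,a}$. Thus $w$ is a ground state of $(P_0)$, hence positive, and evaluating the equation at its peak together with $(f_1)$ and $\lambda<0$ gives $f(\|w\|_\infty)\ge(V(0)-\lambda)\|w\|_\infty$; since $f(t)/t\to 0$ as $t\to 0^+$ this forces $\|w\|_\infty\ge t^*$ for a fixed $t^*>0$. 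Shrinking $\tau^*$ if necessary we may assume $b<t^*$, so $\|w\|_\infty>b$. The strong convergence also yields $|w_n|^2\to|w|^2$ in $L^1(\mathbb{R}^N)$, i.e. the densities $|w_n|^2$ are tight.

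Next I would locate the concentration point $y_n:=\epsilon_n\tilde y_n$ by exploiting that the vanishing gap splits into the two nonnegative pieces $\tfrac12\int(V(\epsilon_n x)-V(0))|u_n|^2$ and $\int(F(u_n)-G_{\epsilon_n}(x,u_n))$, each of which therefore tends to $0$. Writing the second piece in the $w_n$ variable: if $\mathrm{dist}(y_n,\overline\Lambda)\not\to 0$, then for $x$ in arbitrarily large balls one has $\epsilon_n x+y_n\in\Lambda^c$, where $G(\epsilon_n x+y_n,\cdot)=\tilde F\le F$; passing to the limit with the strong convergence forces $\int_{\mathbb{R}^N}(F(w)-\tilde F(w))=0$, hence $|w|\le b$ a.e., contradicting $\|w\|_\infty>b$. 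Therefore $(y_n)$ is bounded (as $\overline\Lambda$ is) and, up to a subsequence, $y_n\to y_0\in\overline\Lambda$. Feeding $y_n\to y_0$ and tightness into the first piece gives $(V(y_0)-V(0))a^2=0$, so $V(y_0)=V(0)$ and hence $y_0\in M$.

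Finally, since $M\subset M_\delta\subset B_R(0)$ we have $\chi(y_0)=y_0$, and using tightness, $\epsilon_n\to 0$, $y_n\to y_0$ and the continuity and boundedness of $\chi$,
\[
\beta_{\epsilon_n}(u_n)=\frac{1}{a^2}\int_{\mathbb{R}^N}\chi(\epsilon_n x+y_n)|w_n|^2\,dx\longrightarrow\chi(y_0)=y_0\in M\subset M_\delta,
\]
so that $\inf_{z\in M_\delta}|\beta_{\epsilon_n}(u_n)-z|\to 0$, contradicting the choice $\ge\delta_0$, which proves the lemma. I expect the main obstacle to be precisely the compactness/location step: one must recognize that the energy sandwich converts the low‑energy family $(u_n)$ into a genuine minimizing sequence for the \emph{autonomous} functional $I_0$, so that Theorem \ref{310} supplies the strong convergence that in Proposition \ref{P1} came from the Palais–Smale information, and then one must use the penalization term to pin the concentration inside $\overline\Lambda$ rather than allowing mass to drift toward a possibly deeper well of $V$ outside $\Lambda$.
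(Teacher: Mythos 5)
Your argument is correct, and at its key step it takes a genuinely different route from the paper's. Both proofs begin identically, with the sandwich $\mathcal{I}_{0,a}\le I_0(u_n)\le J_{\epsilon_n}(u_n)\le \mathcal{I}_{0,a}+\gamma(\epsilon_n)$ (which, in both cases, presupposes $V(\epsilon x)\ge V(0)$, i.e.\ that $\min_{\overline\Lambda}V$ is a global infimum of $V$), and both end with the same barycenter computation $\beta_{\epsilon_n}(u_n)-z_n=a^{-2}\int_{\mathbb{R}^N}\bigl(\chi(\epsilon_n x+z_n)-z_n\bigr)|\tilde u_n|^2\,dx\to 0$. The difference is in the middle: the paper simply invokes Proposition \ref{P1} to produce the translations, the strong convergence of the translated sequence, and the convergence $\epsilon_n\tilde z_n\to y\in M$; you instead rebuild that information from the energy alone, obtaining compactness by applying Theorem \ref{310} to $(u_n)$ as a minimizing sequence of the autonomous functional $I_0$ (which is in fact how Proposition \ref{P1}'s own proof gets strong convergence), and obtaining the localization $y_n\to y_0\in M$ from the splitting of the vanishing gap $J_{\epsilon_n}(u_n)-I_0(u_n)$ into the two nonnegative pieces $\frac12\int(V(\epsilon_n x)-V(0))|u_n|^2\,dx$ and $\int(F(u_n)-G_{\epsilon_n}(x,u_n))\,dx$, combined with the bound $|w|_\infty>b$. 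This difference actually buys something: Proposition \ref{P1} is stated for Palais--Smale sequences, and its proof (Claim \ref{A1}) genuinely uses the derivative information $\|J_{\epsilon_n}|'_{S(a)}(u_n)\|\to 0$ through the Lagrange multiplier equation, whereas elements of $\tilde S(a)$ are merely low-energy points with no almost-criticality attached; so the paper's citation of Proposition \ref{P1} in this lemma is, strictly speaking, outside that proposition's hypotheses, and your penalization-gap argument closes exactly that loophole, at the cost of a longer proof.

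Two small points you should tighten. First, the constant $t^*$ in your lower bound $|w|_\infty\ge t^*$ must be uniform over \emph{all} minimizers $w$ of $I_0$ on $S(a)$, since $b$ (equivalently $\tau^*$) is fixed before the contradiction sequence is chosen; this uniformity does hold, because any such minimizer has Lagrange multiplier $\lambda\le 2\mathcal{I}_{0,a}/a^2<0$ (use $qF(t)\le tf(t)$, exactly as in Lemma \ref{L3*}), and the evaluation of the equation at the maximum point should be understood in the $W^{2,p}_{loc}$/a.e.\ sense, as $f$ is only continuous. Second, your additional shrinking of $\tau^*$ (to guarantee $b<t^*$) is one more such adjustment than the paper performs inside this lemma, but it is of the same nature as the shrinkings already carried out in Lemmas \ref{L2}, \ref{ps-d} and Proposition \ref{P1}, so it is harmless.
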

\begin{proof} Let $\epsilon_n \to 0$ and $u_n \in \tilde{S}(a)$ such that 
	$$
	\inf_{z \in M_\delta}|\beta_{\epsilon_n}(u_n)-z|=\sup_{u \in \tilde{S}(a) } \inf_{z \in M_\delta}|\beta_{\epsilon_n}(u_n)-z|+o_n(1).
	$$
From the equality above, it suffices to find a sequence $(z_n) \subset M_\delta$ such that
	$$
	\lim_{n \to +\infty}|\beta_\epsilon(u_n)-z_n|=0.
	$$
	Since $u_n \in \tilde{S}(a)$, 
	$$
	\mathcal{I}_{0,a}\leq I_0(u_n) \leq J_{\epsilon_n}(u_n)\leq 	\mathcal{I}_{0,a}+\gamma(\epsilon_n), \quad \forall n \in \mathbb{N},
	$$ 
	and so,  
	$$
	u_n \in {S}(a) \quad \mbox{and} \quad J_{\epsilon_n}(u_n) \to 	\mathcal{I}_{0,a}.
	$$
	From Proposition \ref{P1}, there is $(\tilde{z}_n) \subset \mathbb{R}^N$ such that $z_n=\epsilon_n \tilde{z}_n \to y$ for some 
	$z \in M$ and $\tilde{u}_n(x)=u_n(x+\tilde z_n)$ is strongly convergent to some $ \tilde{u} \in H^{1}(\mathbb{R}^N) \setminus \{0\}$. Then, $(z_n) \subset M_\delta$ for $n$ large enough and 
	$$
	\beta_{\epsilon_n}(u_n)=z_n+ \frac{\int_{\mathbb{R}^N}(\chi(\epsilon_n z+z_n)-z_n)|\tilde{u}_n|^2\,dz}{a^2},
	$$
	implying that
	$$
	\beta_{\epsilon_n}(u_n)-z_n=\frac{\int_{\mathbb{R}^N}(\chi(\epsilon_n z+z_n)-z_n)|\tilde{u}_n|^2\,dz}{a^2} \to 0 \quad \mbox{as} \quad n \to +\infty,
	$$
	proving the lemma. 
\end{proof}
\subsection{Proof of Theorem \ref{T01}} We will divide the proof into three parts: \\

\vspace{0.1 cm}

\noindent {\bf Part I: Multiplicity of solutions for problem (\ref{1100}).} \\

Hereafter, we set $\epsilon \in (0, \epsilon_0)$. Then, by Lemmas 
\ref{lemmaC1}, \ref{lemmaC2} and \ref{lemmaC3}, we can argue as in \cite{CingolaniLazzo} to conclude that $\beta_\epsilon \circ \Phi_\epsilon$ is
homotopic to the inclusion map $id:M \to M_\delta$, and so, 
$$
cat(\tilde{S}(a)) \geq cat_{M_\delta}(M).
$$
By Lemmas \ref{L1} and \ref{ps-d}, we know that $J_{\epsilon}$ is bounded from below on $S(a)$ and $J_\epsilon$ satisfies the $(PS)_c$ condition for $c \in (\mathcal{I}_{0,a},\mathcal{I}_{0,a}+\gamma(\epsilon))$ respectively. Then, the
Lusternik-Schnirelmann category of critical points (see \cite{Ghoussoub} and \cite{Willem} ) gives that  $J_\epsilon$ 
has at last $cat_{M_\delta}(M)$ of critical points on $S(a)$. 

\vspace{0.3 cm}

\noindent {\bf Part II: Multiplicity of solutions for problem (\ref{110}).} \\

Let $u_\epsilon$ be a solution of (\ref{1100}) with $J_\epsilon(u_\epsilon) \leq \mathcal{I}_{0,a}+\gamma(\epsilon)$, where $\gamma$ was given in (\ref{Sa3}).  Our goal is to prove that decreasing $\epsilon_0$ if necessary, we have  
$$
\vert u_{\epsilon}(x)\vert \leq b \quad\text{for all } x\in \overline{\Lambda}^{c}_{\epsilon},
\quad \overline{\Lambda}_{\epsilon}:=\{x\in \mathbb{R}^{2}: \epsilon x\in \overline{\Lambda}\},
$$
showing that $u_\epsilon$ is a solution of (\ref{110}) for all $\epsilon \in (0, \epsilon_0)$.

 Arguing as in the proof of Proposition \ref{P1}, for each $\epsilon_n \to 0$, there is $\tilde{y}_n \in \mathbb{R}^N$ such that $y_n=\epsilon_n\tilde{y}_n \to y$ with $y \in M$ and $\tilde{u}_n(x)=u_n(x+\tilde{y}_n)$ is strongly convergent to $\tilde{u} \in H^{1}(\mathbb{R}^N)$ with $\tilde{u} \not=0$. As $\tilde{u}_n$ is a solution of 
$$
-\Delta \tilde{u}_n+V(\epsilon_n x+y_n)\tilde{u}_n=\lambda_n \tilde{u}_n +f(\tilde{u}_n), \quad \mbox{in} \quad \mathbb{R}^N, 
$$
with 
$$
\limsup_{\epsilon \to 0}\lambda_n \leq \frac{2(\rho_1+\mathcal{I}_{0,a})}{a^2}<0,
$$
the convergence $\tilde{u}_n \to \tilde{u}$ in $H^{1}(\mathbb{R}^N)$ permits to apply the same arguments found in \cite[Lemma 4.5]{AFig} to conclude that  
$$
\lim_{|x| \to +\infty}\tilde{u}_n(x)=0, \quad \mbox{uniformly in} \quad \mathbb{N}.
$$
Therefore, given $\tau>0$, there are $R_1>0$ and  $n_0 \in \mathbb{N}$ such that
$$
|\tilde{u}_n(x)| \leq \frac{b}{2} \quad \mbox{for} \quad |x| \geq R_1 \quad \mbox{and} \quad n \geq n_0.
$$ 
Now, using the fact that $y_n=\epsilon_n \tilde{y}_n \to y_0 \in \Lambda$, there is $n_0 \in \mathbb{N}$ such that if $x \not\in \Lambda_{\epsilon_n}$, then  
$$
|x-\tilde{y}_n|=\frac{1}{\epsilon_n}|\epsilon_n x-y_n|\geq \frac{r_1}{2\epsilon_n}>R_1, \quad \forall n \geq n_0,
$$
where $r_1=dist(y_0,\partial \Lambda)$. Hence, 
$$
|u_n(x)|=|\tilde{u}_n(x-\tilde{y}_n)|\leq \frac{b}{2}, \quad \forall x \not\in \Lambda_{\epsilon_n} \quad \mbox{and} \quad n \geq n_0, 
$$
showing the $u_n$ is a solution of (\ref{110}) for $n \geq n_0$. This proves the desired result. 

\vspace{0.3 cm}

\noindent {\bf Part III: Concentration phenomena  of the solutions for problem (\ref{110}).} \\

The concentration phenomena of the solutions follows as in \cite[Subsection 4.1, See Part II]{AT1} and we omit its proof. 

\vspace{1 cm}

\noindent {\bf Data Availability Statement:} Data sharing not applicable to this article as no datasets were generated or analysed during the current study.

\end{document}